\theoremstyle{plain}
\newtheorem{theorem}[subsection]{Theorem}
\newtheorem{lemma}[subsection]{Lemma}
\newtheorem{proposition}[subsection]{Proposition}
\newtheorem{corollary}[subsection]{Corollary}
\theoremstyle{definition}
\newtheorem{definition}[subsection]{Definition}
\newtheorem{remark}[subsection]{Remark}
\newtheorem{examples}[subsection]{Examples}
\newcommand{\comp}{\raisebox{0.2mm}{\ensuremath{\scriptstyle{\circ}}}}
\newcommand{\defn}{\textbf}
\newcommand{\noproof}{\hfill \qed}
\newcommand{\To}{\Rightarrow}
\newcommand{\Aut}{\ensuremath{\mathrm{Aut}}}
\DeclareMathOperator{\cod}{Cod}
\newcommand{\dis}{\ensuremath{\mathrm{Dis}}}
\newcommand{\Gal}{\mathrm{Gal}_\Gamma}
\renewcommand{\H}{\mathrm{H}}
\DeclareMathOperator{\Hom}{Hom}
\DeclareMathOperator{\Ker}{Ker}
\DeclareMathOperator{\op}{op}
\newcommand{\R}[1]{\mathrm{Eq}(#1)}
\newcommand{\C}{\ensuremath{\mathcal{C}}}
\newcommand{\Dc}{\ensuremath{\mathcal{D}}}
\newcommand{\E}{\ensuremath{\mathcal{E}}}
\newcommand{\F}{\ensuremath{\mathcal{F}}}
\newcommand{\M}{\ensuremath{\mathcal{M}}}
\newcommand{\N}{\ensuremath{\mathcal{N}}}
\newcommand{\X}{\ensuremath{\mathcal{X}}}
\newcommand{\Ab}{\ensuremath{\mathsf{Ab}}}
\newcommand{\Arr}{\ensuremath{\mathsf{Arr}}}
\newcommand{\Ext}{\ensuremath{\mathsf{Ext}}}
\newcommand{\Gp}{\ensuremath{\mathsf{Gp}}}
\newcommand{\NExt}{\ensuremath{\mathsf{NExt}_\Gamma}}
\newcommand{\Set}{\ensuremath{\mathsf{Set}}}
\newcommand{\LoCo}{\ensuremath{\mathsf{LoCo}}}
\def\pullback{
 \ar@{-}[]+R+<6pt,-1pt>;[]+RD+<6pt,-6pt>%
 \ar@{-}[]+D+<1pt,-6pt>;[]+RD+<6pt,-6pt>}
\def\pushout{%
 \ar@{-}[]+L+<-6pt,1pt>;[]+LU+<-6pt,6pt>%
 \ar@{-}[]+U+<-1pt,6pt>;[]+LU+<-6pt,6pt>}
\def\splitpullback{%
 \ar@{-}[]+R+<6pt,-.51ex>;[]+RD+<6pt,-6pt>%
 \ar@{-}[]+D+<.51ex,-6pt>;[]+RD+<6pt,-6pt>}
\def\skewpullback{%
 \ar@{-}[]+LD+<-6pt,-6pt>;[]+LDD+<-6pt,-15.5pt>%
 \ar@{-}[]+D+<-1pt,-6pt>;[]+LDD+<-6pt,-15.5pt>}
\title{\large\bf THE FUNDAMENTAL GROUP FUNCTOR\\ AS A KAN EXTENSION\footnotetext{The first author's research was supported by Fonds voor Wetenschappelijk Onderzoek (FWO-Vlaanderen). The third author works as \emph{charg\'e de recherches} for Fonds de la Recherche Scientifique--FNRS. Both would like to thank the DPMMS for its kind hospitality during their stays in Cambridge.}}
\author{\em dedicated to Ren\'e Guitart\\ \em on the occasion of his sixty-fifth birthday}
\begin{document}

\date{}

\maketitle

\vspace*{-8mm}

\begin{center} {\textbf{\em by Tomas EVERAERT, Julia GOEDECKE and Tim VAN DER LINDEN}}
\end{center}

\vspace*{2mm}

\begin{minipage}{118mm}{\small
\textbf{R\'esum\'e.} On montre que le foncteur \emph{groupe fondamental} consi\-d\'er\'e en th\'eorie de Ga\-lois cat\'egorique peut \^etre calcul\'e comme une extension de Kan.

\smallskip
\textbf{Abstract.} We prove that the fundamental group functor from categorical Galois theory may be computed as a Kan extension.

\medskip
\textbf{Keywords.} Homology, categorical Galois theory, semi-abelian category, higher central extension, satellite, Kan extension, fundamental group

\smallskip
\textbf{Mathematics Subject Classification (2010).} 18G50, 18G60, 18G15, 20J, 55N
}\end{minipage}

\thispagestyle{empty}


\section{Introduction}
The main aim of this paper is to prove that the fundamental group from categorical Galois theory \cite{Janelidze:Hopf} may be computed as a Kan extension:
\begin{equation}\label{finalKanextension}
\vcenter{\xymatrix{& \NExt(\C) \ar@{}[d]|-{\Nearrow}_(.45){\delta} \ar[ld]_-{\cod} \ar[rd]^-{\Ker} \\
\C \ar@{.>}[rr]_-{\pi_{1}(-,I)} && \X }}
\end{equation}
This makes it a \emph{satellite} in the sense of Janelidze~\cite{Janelidze-Satellites}, Guitart--Van den Bril~\cite{Guitart-Bril, Guitart:Anabelian} and two authors of the present paper~\cite{GVdL2}. Here \(\Gamma\) is a Galois structure, consisting of an adjunction \(I\dashv H\colon \C\to \X\) and certain classes of morphisms, \(\NExt(\C)\) is the category of \emph{normal extensions}, which are defined via the Galois structure \(\Gamma\), $\Ker$ is the kernel functor and $\cod$ is the codomain functor. 

In fact, we will see this in two steps. First we show that the following is a Kan extension: 
\begin{equation}\label{intermediateKanextension}
\vcenter{\xymatrix{& \NExt(\C) \ar@{}[d]|-{\Nearrow}_{\kappa} \ar[ld]_-{\cod} \ar[rd]^-{\Gal(-,0)} \\
\C \ar@{.>}[rr]_-{\pi_{1}(-,I)} && \Gp(\X) }}
\end{equation}
Here \(\Gal(-,0)\) gives the Galois group of a normal extension, as defined in the context of categorical Galois theory by Janelidze \cite{Janelidze:Hopf}. This step uses that the Galois group functor is a \emph{Baer invariant} with respect to the codomain functor, in the following sense: any two morphisms between objects in \(\NExt(\C)\) which agree on the codomain of the objects are sent to the \emph{same} morphism between the Galois groups. This makes it possible to define \(\pi_1(B,I)\) by taking a \emph{weakly universal} normal extension \(u\colon {U\to B}\) of \(B\), and then applying the Galois group functor to it. The above property ensures that this assignment is well defined, i.e.~independent of the choice of \(u\), and functorial in \(B\). 

To attain the first-mentioned Kan extension from this one, we use the fact that the underlying object of the Galois group of a normal extension \(p\colon {E\to B}\) can be computed as the intersection of the kernel of \(p\) with the kernel of the unit \(\eta_E\colon {E\to HI(E)}\). This makes it a subobject of \(\Ker(p)\), and so gives a component-wise monic natural transformation \(\iota\colon{\Gal(-,0)\To \Ker}\). We then show that, for any given functor \(F\colon {\C\to \X}\), any natural transformation \({F\comp \cod\To \Ker}\) lifts over this \(\iota\). This implies that the universal property of the Kan extension~\eqref{intermediateKanextension} carries over to \eqref{finalKanextension}.

Our arguments go through under fairly weak assumptions on the Galois structure \(\Gamma\), and can moreover be adapted to situations where the fundamental group functor is not everywhere defined. In the latter case, we obtain a Kan extension similar to \eqref{finalKanextension} and \eqref{intermediateKanextension}, by replacing \(\C\) with its full subcategory of objects \(B\) for which \(\pi_1(B,I)\) \emph{is} defined, and restrict \(\NExt(\C)\) accordingly.

When \(\C\) is pointed, exact and Mal'tsev, and \(\X\) is a Birkhoff subcategory of~\(\C\), we show that \eqref{finalKanextension} induces a Kan extension
 \[
\xymatrix{& \Ext_{\Gamma}(\C) \ar@{}[d]|-{\Nearrow}_(.45){} \ar[ld]_-{\cod} \ar[rd]^-{\Ker\circ I_1} \\
\C \ar@{.>}[rr]_-{\pi_{1}(-,I)} && \X }
\]
where \(\Ext_{\Gamma}(\C)\) is the category of regular epimorphisms (=extensions) in \(\C\), and~\(I_1\) is left adjoint to the inclusion functor \(\NExt(\C)\to \Ext_{\Gamma}(\C)\). In the case of a semi-abelian \(\C\), this Kan extension was first obtained in \cite{GVdL2}, where it was also shown that, for a given extension \(p\), the \(p\)-component of the universal natural transformation defining it is a connecting homomorphism in the long exact homology sequence induced by \(p\). 

The latter result, we will see, has a topological counterpart: for a certain Galois structure, the components of the universal natural transformation \(\delta\) defining the Kan extension \eqref{finalKanextension} (or, actually, the ``restricted'' version, since here the fundamental group functor is not everywhere defined) are connecting maps in an exact \emph{homotopy} sequence. 

Note that we have used the same notation \(\pi_1(-,I)\) for functors \(\C\to \Gp(\X)\) and \(\C\to \X\) and have called both ``fundamental group functor'', while the image of an object \(B\in|\C|\) under the latter is actually the \emph{underlying object} of the fundamental group \(\pi_1(B,I)\). A similar remark can be made regarding the Galois group functor $\Gal(-,0)$. This does not pose any problems when \(\X\) is Mal'tsev, since then any internal group is determined, up to isomorphism, by its underlying object. However, the latter is of course not true in general, and it is in particular false for the topological example just referred to. 

\section{Galois structures}
To define the ingredients of the Kan extensions considered in this paper, we need a \emph{Galois structure} and the concept of \emph{normal extension} arising from it, as introduced by Janelidze~\cite{Janelidze:Pure, Janelidze:Recent}.

\begin{definition} A \defn{Galois structure} \(\Gamma=(\C,\X,H,I,\eta, \epsilon, \E, \F)\) on a category \(\C\) consists of an adjunction
\[
\xymatrix@1{\C \ar@<1ex>[r]^I \ar@{}[r]|\bot & \X \ar@<1ex>[l]^H}
\]
with unit \(\eta\colon {1_{\C}\To HI}\) and counit \(\epsilon\colon {IH\To 1_{\X}}\), as well as classes of morphisms \(\E\) in \(\C\) and \(\F\) in \(\X\) such that
\begin{enumerate}
 \item \(\E\) and \(\F\) contain all isomorphisms;
 \item \(\E\) and \(\F\) are pullback-stable, meaning here that the pullback of a morphism in \(\E\) (resp.\ \(\F\)) along any morphism \emph{exists} and is in \(\E\) (resp.\ \(\F\));
 \item \(\E\) and \(\F\) are closed under composition;
 \item \(H(\F)\subseteq \E\);
 \item \(I(\E)\subseteq \F\).
\end{enumerate}
We will use the terminology of \cite{Janelidze:Recent} and call the morphisms in  \(\E\) \defn{fibrations}.
\end{definition}

Given such a Galois structure, some fibrations have some additional useful and interesting properties. We write \((\E\downarrow B)\) for the full subcategory of the slice category \((\C\downarrow B)\) determined by morphisms in \(\E\).

\begin{definition}
 A \defn{trivial covering} is a morphism \(f\colon{A\to B}\) in \(\E\) such that
 \[
 \xymatrix{A \ar[r]^-{\eta_A} \ar[d]_f & HI(A) \ar[d]^{HI(f)}\\
 B \ar[r]_-{\eta_B} & HI(B)}
 \]
is a pullback. A \defn{monadic extension} is a fibration \(p\colon {E\to B}\) such that the pullback functor \(p^*\colon {(\E\downarrow B)\to (\E\downarrow E)}\) is monadic. A \defn{covering} (sometimes called \defn{central extension}) is a fibration \(f\colon {A\to B}\) whose pullback \(p^*(f)\) along \emph{some} monadic extension \(p\) is trivial. A \defn{normal extension} is a monadic extension \(p\) such that \(p^*(p)\) is a trivial covering, i.e.\ a monadic extension with trivial kernel pair projections.
\end{definition}

The trivial coverings are exactly those fibrations which are cartesian with respect to the functor \(I\colon {\C\to \X}\). 

For many uses of such Galois structures, we need \(\Gamma\) to satisfy an extra property called \emph{admissibility}. For this we consider the induced adjunction
\[
 \xymatrix@1{(\E\downarrow B) \ar@<1ex>[r]^-{I^B} \ar@{}[r]|-\bot & (\F\downarrow I(B)) \ar@<1ex>[l]^-{H^B}}
\]
for any object \(B\in\C\); here \(I^B\colon {(\E\downarrow B)\to (\F\downarrow I(B))}\) is the restriction of \(I\), and \(H^B\) sends a fibration \(g\colon {X\to I(B)}\) to the pullback of \(H(g)\) along \(\eta_B\):
\[
\xymatrix{A \ar[r]^-{\eta_A} \ar[d]_-{H^B(g)} \pullback & H(X) \ar[d]^-{H(g)}\\
B \ar[r]_-{\eta_B} & HI(B)}
\]

\begin{definition}
 A Galois structure \(\Gamma=(\C,\X,H,I,\eta,\epsilon,\E,\F)\) is \defn{admissible} when all functors \(H^B\) are full and faithful. 
\end{definition}

An important consequence of admissibility is
\begin{lemma}\cite[Proposition 2.4]{JK:Reflectiveness}\label{Lemma I preserves pullbacks along trivial}
 If \(\Gamma\) is admissible, then \(I\colon {\C\to \X}\) preserves pullbacks along trivial coverings. In particular, the trivial coverings are pullback-stable.\noproof
\end{lemma}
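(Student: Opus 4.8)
The plan is to exploit the characterisation, recorded just after the definition of normal extension, that a trivial covering $f\colon C\to B$ is precisely a fibration for which $C$, equipped with $f$ and $\eta_C$, is the pullback of $HI(f)$ along $\eta_B$; equivalently, $f\iso H^B(I(f))$ as an object of $(\E\downarrow B)$. So I would start from an arbitrary pullback
\[
\xymatrix{P \ar[r]^-{\pi_2} \ar[d]_-{\pi_1} & C \ar[d]^-{f}\\ A \ar[r]_-{g} & B}
\]
in which $f$ is a trivial covering and $g$ is arbitrary, and aim to show that the canonical comparison morphism $c\colon I(P)\to I(A)\times_{I(B)} I(C)$ is an isomorphism. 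Note that the target pullback exists: $I(f)\in\F$ because $I(\E)\subseteq\F$, and $\F$ is pullback-stable.

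The first move is to reorganise $P$ as a pullback along a unit. Writing $Q=I(A)\times_{I(B)}I(C)$ and $q\colon Q\to I(A)$ for the first projection (which lies in $\F$ by pullback-stability), the right adjoint $H$ preserves the pullback defining $Q$, so $H(Q)\iso HI(A)\times_{HI(B)}HI(C)$ in $\C$. Pasting this with the trivial-covering square for $f$ and using naturality of $\eta$ (so that $HI(g)\comp\eta_A=\eta_B\comp g$), the pasting lemma identifies $P$, via its induced map $\theta\colon P\to H(Q)$, with the pullback of $H(q)$ along $\eta_A$. In other words $\pi_1\iso H^A(q)$, which exhibits $\pi_1$ as a value of the right adjoint $H^A$.

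Now admissibility enters. Since $H^A$ is full and faithful, the counit $\epsilon^A_q\colon I^A H^A(q)\To q$ of the adjunction $I^A\dashv H^A$ is an isomorphism. As $I^A$ is the restriction of $I$, this counit is a morphism $I(P)\to Q$ lying over $I(A)$, and the crux is to verify that it coincides with $c$. I would check this by computing both projections of $\epsilon^A_q$ through the triangle identity $\epsilon_{I(-)}\comp I(\eta_{-})=1$: using $H(q)\comp\theta=\eta_A\comp\pi_1$ the first projection recovers $I(\pi_1)$, and using the analogous relation with $\eta_C\comp\pi_2$ the second recovers $I(\pi_2)$, so $\epsilon^A_q=c$. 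Hence $c$ is an isomorphism and $I$ preserves the pullback. This bookkeeping—matching the abstract counit with the concrete comparison map—is the one genuinely fiddly point, and I expect it to be the main obstacle; everything else is formal pasting.

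For the final assertion I would show that $\pi_1$ is itself a trivial covering. It lies in $\E$ because $f$ does and $\E$ is pullback-stable. For the pullback condition I would transport the square exhibiting $\pi_1\iso H^A(q)$ along the isomorphism $c\colon I(P)\iso Q$ just obtained: one has $\theta=H(c)\comp\eta_P$ and $H(q)\comp H(c)=HI(\pi_1)$, again by the triangle identities, so under $H(c)$ the square becomes exactly the naturality square of $\eta$ at $\pi_1$. Since $H(c)$ is an isomorphism and the original square is a pullback, this naturality square is a pullback, whence $\pi_1$ is a trivial covering, as required.
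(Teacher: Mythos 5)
Your proof is correct: the reduction of $\pi_1$ to $H^A(q)$ by pasting the trivial-covering square for $f$ onto the defining square of $P$, the use of admissibility to make the counit $\epsilon^A_q=\epsilon_Q\comp I(\theta)$ invertible, and the triangle-identity bookkeeping identifying it with the comparison map all check out, as does the transport argument for the final assertion. The paper itself offers no proof here---it cites \cite[Proposition 2.4]{JK:Reflectiveness} and marks the lemma \emph{noproof}---and your argument is essentially the standard one given in that reference.
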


So if the Galois structure is admissible, we can view the class of all trivial coverings as the pullback-closure of \(H(\F)\), while the coverings are \emph{locally trivial}. In certain situations the coverings are also pullback-stable:

\begin{lemma}\label{Lemma Pullback Stable Normal}
If \(\Gamma\) is admissible and monadic extensions are pullback-stable, then normal extensions and coverings are pullback-stable.
\end{lemma}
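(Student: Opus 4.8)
The plan is to handle both classes by a single mechanism, reducing each to the pullback-stability of \emph{trivial} coverings, which is already available from Lemma~\ref{Lemma I preserves pullbacks along trivial} via admissibility. The two extra ingredients are the hypothesis that monadic extensions are pullback-stable, used to produce a witnessing monadic extension over the new base, and axiom~(ii) of a Galois structure, which guarantees both that the morphisms I pull back remain fibrations and that the relevant pullbacks exist. The technical heart is the pullback-pasting lemma, which lets me re-express a pullback formed over the new base as a pullback formed over the old total space, where stability of trivial coverings can be invoked.

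I would first treat coverings. Let $f\colon A\to B$ be a covering, witnessed by a monadic extension $p\colon E\to B$ for which $p^*(f)$ is a trivial covering, and let $b\colon B'\to B$ be arbitrary. Since $f\in\E$, axiom~(ii) ensures that $b^*(f)\colon A'\to B'$ exists and again lies in $\E$. As monadic extensions are pullback-stable by hypothesis, the pullback $p'\colon E'\to B'$ of $p$ along $b$ is a monadic extension, and I take $p'$ as the witness. Writing $e\colon E'\to E$ for the canonical projection, so that $b\comp p'=p\comp e$, both composites compute the pullback of $f$ along this common map, and hence the pasting lemma gives
\[
(p')^*\bigl(b^*(f)\bigr)\iso e^*\bigl(p^*(f)\bigr).
\]
Because $p^*(f)$ is a trivial covering and trivial coverings are pullback-stable, the right-hand side is trivial; thus $(p')^*(b^*(f))$ is trivial and $b^*(f)$ is a covering.

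For normal extensions the same computation applies, now to the diagonal datum $f=p$. Let $p\colon E\to B$ be a normal extension and $b\colon B'\to B$ arbitrary, and set $p'=b^*(p)\colon E'\to B'$, which is monadic by hypothesis. Normality of $p$ says exactly that the kernel-pair projection $p^*(p)$ is a trivial covering, and I must show the same for $(p')^*(p')$. Since the kernel pair of $p'$ is the pullback along $b$ of the kernel pair of $p$, the identity $b\comp p'=p\comp e$ and pasting yield $(p')^*(p')\iso e^*\bigl(p^*(p)\bigr)$, which is trivial by pullback-stability of trivial coverings; hence $p'$ is a normal extension.

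I expect the main obstacle to be bookkeeping rather than conceptual: verifying the two pasting isomorphisms precisely, in particular that the map along which one pulls back is the canonical projection $e\colon E'\to E$ and that the limit cones match on the nose, so that stability of trivial coverings genuinely applies. Once these identifications are secured, admissibility through Lemma~\ref{Lemma I preserves pullbacks along trivial}, together with the hypothesis on monadic extensions and axiom~(ii), does the remaining work.
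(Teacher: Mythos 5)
Your argument is correct: the pasting of pullbacks along $b\comp p'=p\comp e$, combined with pullback-stability of trivial coverings (Lemma~\ref{Lemma I preserves pullbacks along trivial}) and of monadic extensions, is exactly the standard proof. The paper itself only cites \cite[Proposition 4.3]{Janelidze-Kelly} for this, and what you have written is essentially a faithful reconstruction of that argument, so there is nothing to object to.
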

\begin{proof}
The proof of \cite[Proposition 4.3]{Janelidze-Kelly} remains valid under our assumptions.
\end{proof}

\begin{examples}\label{Examples}
There are many different kinds of categorical Galois structures; we list a few which are relevant for the present article.
\begin{enumerate}
\item\label{groups} Take \(\C=\Gp\) and \(\X=\Ab\), the subcategory of abelian groups in the category of groups, and let \(I\) be the abelianisation functor sending a group~\(G\) to the quotient \(G/[G,G]\), which is left adjoint to the inclusion \(H\). Then choosing \(\E\) and \(\F\) to be the classes of surjective group homomorphisms defines an admissible Galois structure~\(\Gamma\) as above. Here every map in \(\E\) is a monadic extension, the trivial coverings are those surjective homomorphisms \(A\to B\) whose restriction to the commutator subgroups \([A,A]\to [B,B]\) is an isomorphism, and the coverings are the central extensions in the usual sense: surjective homomorphisms whose kernel lies in the centre of the domain. Normal extensions and coverings coincide. (See \cite{Janelidze:Pure}.)

\item\label{Mal'tsev} More generally, taking for \(\C\) an exact Mal'tsev (or Goursat) category and for \(\X\) a Birkhoff subcategory (= a full reflective subcategory closed under subobjects and regular quotients), and all regular epimorphisms for \(\E\) and~\(\F\), defines an admissible Galois structure \(\Gamma\), whose coverings are studied in \cite{Janelidze-Kelly}. Normal extensions and coverings still coincide, and every regular epimorphism is a monadic extension. In particular, \(\C\) could be a Mal'tsev \emph{variety} and \(\X\) its subvariety of abelian algebras, in which case the coverings are the central extensions arising from commutator theory in universal algebra: those surjective homomorphisms \(f\colon A\to B\) for which the commutator \([\R{f}, A\times A]\) of the kernel congruence \(\R{f}\) of \(f\) with the largest congruence \(A\times A\) on \(A\) is trivial (see \cite{Janelidze-Kelly:Maltsev,Gran-Alg-Cent}). Or, \(\C\) could be a variety of \(\Omega\)-groups~\cite{Higgins} and \(\X\) an 
arbitrary subvariety of \(\C\). Now the coverings are the (relative) central extensions studied by Fr\"ohlich and others (see \cite{Janelidze-Kelly}). 

\item\label{topological} Consider \(\C=\LoCo\) to be the category of locally connected topological spaces and \(\X=\Set\) the category of sets. Take \(I=\pi_0\), the connected components functor, \(H=\dis\) the discrete topology functor, \(\E\) the class of \'etale maps (= local homeomorphisms), and \(\F\) the class of all maps in \(\Set\). This gives another admissible Galois structure. Here the monadic extensions are exactly the \emph{surjective} local homeomorphisms, the trivial coverings and the coverings are, respectively, the disjoint unions of trivial covering maps, and the covering maps, in the usual topological sense. For connected \(A\) and \(B\), a normal extension \(f\colon A\to B\) is the same as a regular covering map: a covering map \(f\colon A\to B\) such that for every pair of elements \(x\), \(y\in A\) which are in the same fibre of \(f\) there is a unique continuous map \(a\colon A\to A\) (actually, a covering) such that \(f=f\comp a\) and \(a(x)=y\). See~\cite[Chapter 6]{Borceux-Janelidze} for more details.

\item\label{simplicial} Similarly, take \(\C\) to be the category of simplicial sets and \(\X=\Set\) with the adjunction consisting of \(I=\pi_0\) and \(H\) giving the discrete simplicial set on a given set. Then taking \(\E\) and \(\F\) to be the classes of all morphisms gives an admissible Galois structure. For this example, monadic extensions are degree-wise surjective functions. The coverings are precisely the coverings in the sense of Gabriel--Zisman \cite{Gabriel-Zisman}: Kan fibrations whose ``Kan liftings'' are uniquely determined. See \cite[A.3.9]{Borceux-Janelidze} for more details.

\item\label{simplicial2} For a different Galois structure $\Gamma$ on the category $\C$ of simplicial sets, let $\X$ be the category of groupoids, and $I$ and $H$ be the fundamental groupoid and nerve functors, and take for $\E$ and $\F$ the classes of Kan fibrations, and of fibrations in the sense of Brown \cite{Brown:Fibrations}, respectively. This particular $\Gamma$ is studied in \cite{Brown-Janelidze:Second} where its covering morphisms are called \emph{second order covering maps}. It is \emph{not} admissible. 

\item\label{pointedtopological} Example \eqref{topological} has an obvious ``pointed'' version, obtained by replacing \(\LoCo\) and \(\Set\) by the categories \(\LoCo_*\) and \(\Set_*\) of pointed locally connected spaces and of pointed sets, respectively. \(\E\) and \(\F\) now consist of those \'etale maps and maps that preserve the basepoint. Clearly, this is still an (admissible) Galois structure; the monadic extensions, trivial coverings, coverings and normal extensions are ``the same'' as in the non-pointed case, only now they are required to be basepoint-preserving.

\item\label{fields} \emph{Categorical Galois theory} does indeed capture classical Galois theory, as the name suggests. For this, let \(k\) be some fixed field and take \(\C\) to be the dual of the category of finite-dimensional commutative \(k\)-algebras with \(\E^{\op}\) all algebra morphisms, \(\X\) the category of finite sets with \(\F\) the class of all functions, and \(I\colon\C\to \X\) defined through idempotent decomposition. See \cite[A.2]{Borceux-Janelidze} or \cite{Janelidze:Pure} for further details. 
\end{enumerate}
\end{examples}

For the rest of this paper, we will assume that our Galois structures are admissible and that \(H\) is in fact an inclusion of a full reflective subcategory \(\X\) into \(\C\). We will also assume that monadic extensions are pullback-stable. Note that this is the case for each of the examples above, with the exception of \eqref{simplicial2}.

One of the important concepts in categorical Galois theory is the \emph{Galois groupoid}:

\begin{definition}\cite{Janelidze:Pure, Janelidze:Hopf}
Let \(p\colon {E\to B}\) be a normal extension of \(B\). Then the \defn{Galois groupoid} \(\Gal(p)\) of \(p\) is the image under \(I\) of the kernel pair \(\R{p}\) of~\(p\).
\[
\xymatrix{ \R{p} \ar@<.5ex>[r]^-{d} \ar@<-.5ex>[r]_-{c} \ar[d]_-{\eta_{\R{p}}} & E \ar[d]^{\eta_{E}} \ar[r]^p & B\\
 I(\R{p}) \ar@<.5ex>[r]^-{I(d)} \ar@<-.5ex>[r]_-{I(c)} & I(E)}
\]
\end{definition}

Note that this image of the kernel pair is indeed a groupoid: since the functor \(I\) preserves pullbacks along trivial coverings (by Lemma \ref{Lemma I preserves pullbacks along trivial}), the image of any groupoid with trivial domain and codomain morphisms is again a groupoid (see the definition of groupoids~\ref{Definition-Groupoids}). And since \(p\) is normal, its kernel pair projections are indeed trivial coverings.

\section{Internal groupoids}\label{Section Groupoids}
We have already seen groupoids enter the picture above, so we recall the definition.

\begin{definition}\label{Definition-Groupoids}
An \defn{internal category} in a category \(\C\) is a diagram
\[
\xymatrix@C=3em{ R_1 \ar@<1ex>[r]^{d} \ar@<-1ex>[r]_{c} & R_0 \ar[l]|e}
\]
such that \(d e = 1_{R_0}=c e\), together with a \defn{multiplication} (or composition)
 \[m\colon {R_1\times_{R_0} R_1\to R_1}\] 
making the following diagrams commute, where the pullback \textbf{(1)} defines the object \(R_1\times_{R_0} R_1\) of ``composable arrows'':
\begin{equation*}\label{eq-Groupoid-Pullbacks}
 \vcenter{\xymatrix{R_1\times_{R_0} R_1 \ar@{}[dr]|{\textbf{(1)}} \pullback \ar[r]^-{p_1} \ar[d]_{p_2} & R_1 \ar[d]^{c}\\
 R_1 \ar[r]_{d} & R_0}}
 \qquad
\vcenter{ \xymatrix{R_1\times_{R_0} R_1 \ar[r]^-{m} \ar[d]_{p_1} \ar@{}[dr]|{\textbf{(2)}} & R_1 \ar[d]^{d}\\
 R_1 \ar[r]_{d} & R_0}}
 \qquad
 \vcenter{\xymatrix{R_1\times_{R_0} R_1 \ar[r]^-{m} \ar[d]_{p_2} \ar@{}[dr]|{\textbf{(3)}} & R_1 \ar[d]^{c}\\
 R_1 \ar[r]_{c} & R_0;}}
\end{equation*}
furthermore, the composition \(m\) makes the diagrams
\[
\resizebox{\textwidth}{!}{
\mbox{$\vcenter{\xymatrix{R_{1} \ar[r]^-{\langle{1_{R_1}, sc}\rangle} \ar@{=}[rd] & R_1\times_{R_{0}}R_{1} \ar[d]^-{m} & R_{1} \ar@{=}[ld] \ar[l]_-{\langle{sd,1_{R_1}}\rangle}\\
& R_{1}}}
\qquad \text{and}\qquad
\vcenter{\xymatrix{R_1\times_{R_0}R_1\times_{R_0}R_1 \ar[r]^-{1\times m} \ar[d]_{m\times 1} & R_1\times_{R_0}R_1 \ar[d]^m\\
R_1\times_{R_0}R_1 \ar[r]_-{m} & R_1}}$}}
\]
commute. An internal category \(R\) is an \defn{internal groupoid} when there exists a morphism \(s\colon R_1\to R_1\) such that \(ds=c\) and \(cs=d\) and both squares
\[
\xymatrix{
R_1 \ar[r]^-{\langle1_{R_1},s\rangle} \ar[d]_{d} & R_1\times_{R_0}R_1 \ar[d]^m \\
R_0 \ar[r]_e & R_1}
 \qquad \qquad \qquad
\xymatrix{
R_1 \ar[r]^-{\langle s,1_{R_1}\rangle} \ar[d]_{c} & R_1\times_{R_0}R_1 \ar[d]^m \\
R_0 \ar[r]_e & R_1}
\]
commute. Such an \(s\) is necessarily unique. In fact, it is well known that an internal category \(R\) is an internal groupoid if and only if \textbf{(2)} and \textbf{(3)} are also pullbacks.

An \defn{internal functor} between two internal categories \(R\) and \(S\) is a pair of morphisms \((f_0,f_1)\) making the three squares with \(d\), \(c\) and \(e\) as on the left
\begin{equation*}\label{internalfunctor}
 \vcenter{\xymatrix@=3em{R_1 \ar[r]^{f_1} \ar@<1ex>[d]^-c \ar@<-1ex>[d]_-d & S_1 \ar@<1ex>[d]^-c \ar@<-1ex>[d]_-d\\
 R_0 \ar[r]^{f_0} \ar[u]|-{e} & S_0 \ar[u]|-{e}}}
 \qquad\qquad\qquad
 \vcenter{\xymatrix{R_1\times_{R_0}R_1 \ar[r]^{f_1\times f_1} \ar[d]_m & S_1\times_{R_0} S_1 \ar[d]^m\\
 R_1 \ar[r]_{f_1} & S_1
 }}
\end{equation*}
as well as the right hand square commute.

An internal groupoid \(R\) with \(R_0=1\), the terminal object, is called an \defn{internal group}. We shall write \(\Gp(\C)\) for the category of internal groups and internal functors. 
\end{definition}

\begin{definition}[Internal natural transformations and isomorphisms]\label{Definition-Homotopy}
Given two internal functors \(f\), \(g\colon {R\to S}\) between internal categories \(R\) and \(S\), an \defn{internal natural transformation} from \(f\) to \(g\) is a morphism \(\mu \colon {R_0\to S_1}\) as in
\begin{equation*} \vcenter{\xymatrix@=3em{R_1 \ar@<.5ex>[r]^{f_1} \ar@<-.5ex>[r]_{g_1} \ar@<1ex>[d]^-c \ar@<-1ex>[d]_-d & S_1 \ar@<1ex>[d]^-c \ar@<-1ex>[d]_-d\\
 R_0 \ar@{.>}[ru]|{\mu} \ar@<.5ex>[r]^{f_0} \ar[u]|-{e} \ar@<-.5ex>[r]_{g_0} & S_0 \ar[u]|-{e}}}
\end{equation*}
satisfying
\[
\begin{minipage}{.4\textwidth}
\begin{enumerate}
 \item \(d\mu=f_0\),
 \item \(c\mu=g_0\),
 \item\label{natural-trans} \(m\langle{f_1,\mu c}\rangle=m\langle{\mu d, g_1}\rangle\).
\end{enumerate}
\end{minipage}
\qquad\qquad
\vcenter{\xymatrix{R_1 \ar[r]^-{\langle{f_1,\mu c}\rangle} \ar[d]_-{\langle{\mu d, g_1}\rangle} & S_1\times_{S_0} S_1 \ar[d]^-m\\
 S_1\times_{S_0} S_1 \ar[r]_-m & S_1}}
\]
For fixed internal categories \(R\) and \(S\), the internal functors \(R\to S\) and the internal natural transformations between them form a category: the composition of two natural transformations \(\mu\colon f\to g\) and \(\nu\colon g\to h\) is given by the morphism \(m\langle \nu,\mu\rangle\); the identity on \(f\) is given by the morphism \(ef_0\). In particular, an internal natural transformation \(\mu\) is an \defn{internal natural isomorphism} when there is a (unique) internal natural transformation \(\nu\) from \(g\) to~\(f\) such that \(m\langle{\mu,\nu}\rangle=ef_0\) and \({m\langle{\nu,\mu}\rangle=eg_0}\).
\end{definition}

\begin{remark}
When \(R\) and \(S\) are internal groupoids, an internal natural transformation is automatically a natural isomorphism between \(f\) and \(g\).
\end{remark}

\begin{remark}
 If \(S\) is a relation, then \(d\) and \(c\) are jointly monic, so (iii) is automatically satisfied.
\end{remark}

In particular, for effective equivalence relations we have

\begin{lemma}\label{Lemma-Induced-Homotopy}
 Given two morphisms \(f=(f_1,f_0)\) and \(g=(g_1,g_0)\) from \(b\colon {B_1\to B_0}\) to \(c\colon {C_1\to C_0}\) satisfying \(f_0=g_0\), there is an internal natural isomorphism between the induced internal functors from~\(\R{b}\) to \(\R{c}\).
\end{lemma}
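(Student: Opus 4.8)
The plan is to exhibit the internal natural isomorphism explicitly as the pairing $\langle f_1,g_1\rangle$, using that both $\R{b}$ and $\R{c}$ are effective equivalence relations and hence internal groupoids. First I would spell out the induced functors. The kernel pair $\R{b}$ is the internal groupoid with object-of-objects $B_1$, object-of-arrows $B_1\times_{B_0}B_1$, and domain and codomain maps the two projections; likewise for $\R{c}$. Since $f=(f_1,f_0)$ is a morphism of arrows from $b$ to $c$, we have $c\comp f_1=f_0\comp b$, so $f_1$ sends a $b$-related pair to a $c$-related pair and therefore restricts to a map $B_1\times_{B_0}B_1\to C_1\times_{C_0}C_1$. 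Together with $f_1\colon B_1\to C_1$ on objects this is the internal functor induced by $f$; call it $F$, and let $G$ be the analogous functor induced by $g$, so that $F$ and $G$ have object-components $f_1$ and $g_1$ respectively.

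Next I would construct the natural transformation. By Definition~\ref{Definition-Homotopy}, a natural transformation from $F$ to $G$ is a morphism $\mu\colon B_1\to C_1\times_{C_0}C_1$ satisfying $d\mu=f_1$ and $c\mu=g_1$ (conditions (i) and (ii)), where $d$ and $c$ now denote the two projections of $\R{c}$. The obvious candidate is $\mu=\langle f_1,g_1\rangle$. The one thing that needs checking is that this pairing really factors through the kernel pair $C_1\times_{C_0}C_1\hookrightarrow C_1\times C_1$, which happens exactly when $c\comp f_1=c\comp g_1$ (here $c\colon C_1\to C_0$). But $f$ and $g$ being morphisms of arrows yields $c\comp f_1=f_0\comp b$ and $c\comp g_1=g_0\comp b$, and these agree by the hypothesis $f_0=g_0$. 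Hence $\mu$ is well defined, and by construction it satisfies (i) and (ii).

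Finally, condition (iii) of Definition~\ref{Definition-Homotopy} is automatic: since $\R{c}$ is a relation, its domain and codomain maps are jointly monic, so the naturality square commutes for free, exactly as noted in the remark immediately preceding the lemma. Thus $\mu$ is an internal natural transformation $F\To G$, and because $\R{b}$ and $\R{c}$ are internal groupoids the earlier remark, stating that an internal natural transformation between internal groupoids is automatically an isomorphism, shows $\mu$ is an internal natural isomorphism. I do not expect a genuine obstacle here: the whole argument reduces to one factorisation through a monomorphism together with two triangle identities. The single point that actually uses the hypothesis is the factorisation of $\langle f_1,g_1\rangle$ through the kernel pair of $c$, which is precisely where $f_0=g_0$ is needed.
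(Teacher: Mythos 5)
Your proposal is correct and follows essentially the same route as the paper's proof: both take \(\mu=\langle f_1,g_1\rangle\), use \(c\comp f_1=f_0\comp b=g_0\comp b=c\comp g_1\) to see that this pairing lands in the kernel pair, and dispose of condition (iii) via the remark that \(\R{c}\) is a relation. The extra detail you give about the induced functors and the factorisation through the monomorphism \(C_1\times_{C_0}C_1\to C_1\times C_1\) is just a fuller write-up of the same argument.
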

\begin{proof}
The condition \(f_0=g_0\) implies that \(cf_1=f_0b=g_0b=cg_1\). So let \(\mu=\langle{f_1,g_1}\rangle\). Then (i) and (ii) from Definition~\ref{Definition-Homotopy} are satisfied by definition, and (iii) is satisfied automatically, as \(\R{c}\) is a kernel pair and so a relation.
\end{proof}

In the special case that \(B_0=C_0=A\) and \(f_0=1_A\), we say that \(f\) is a \defn{morphism over~\(A\)}.

From now on, let \(\C\) be a finitely complete pointed category. Then for any groupoid \(R\) in \(\C\), we may restrict $R_0$ to the zero object $0$,
and $R_1$ to  \(\Ker(d)\cap\Ker(c)\), which gives us the internal group of ``loops at \(0\)'' or ``internal automorphisms at~\(0\)'', which we denote by \(\Aut_R(0)\). When we restrict to this group of internal automorphisms, natural isomorphisms as above collapse the two functors onto each other:

\begin{lemma}\label{Lemma-Same-On-Intersection}
Any two naturally isomorphic functors \(f\), \(g\colon {R\to S}\) between internal categories induce the same morphism~\({\Aut_R(0)\to \Aut_S(0)}\).
\end{lemma}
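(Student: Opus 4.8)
The plan is to push the naturality condition (iii) of Definition~\ref{Definition-Homotopy} down to the automorphism subobject and exploit that the zero object \(0\) is \emph{initial}. Write \(k_R\colon{\Aut_R(0)\to R_1}\) for the monomorphism realising \(\Aut_R(0)=\Ker(d)\cap\Ker(c)\), and similarly \(k_S\). First I would record that every internal functor \(f\) restricts: since \(0\) is initial, \(f_0\) carries the basepoint \({0\to R_0}\) to the basepoint \({0\to S_0}\), so from \(d\comp f_1=f_0\comp d\) and \(c\comp f_1=f_0\comp c\) one sees that \(d_S\comp f_1\comp k_R\) and \(c_S\comp f_1\comp k_R\) factor through \(0\) (being \(f_0\) postcomposed with the zero morphisms \(d\comp k_R\), \(c\comp k_R\)). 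Hence \(f_1\comp k_R\) factors through \(\Ker(d)\cap\Ker(c)=\Aut_S(0)\), giving the induced map \(\Aut_f(0)\), the unique factorisation through \(k_S\); likewise for \(\Aut_g(0)\).

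The crucial observation is that the component of \(\mu\) at the basepoint is forced to be the identity loop. Indeed, \(\mu\comp(0\to R_0)\) is a morphism \({0\to S_1}\) out of the initial object, hence \emph{the} unique such morphism; but \(e\comp(0\to S_0)\) is also a morphism \({0\to S_1}\), so the two coincide. Thus the basepoint component of \(\mu\) equals the unit \(e\comp(0\to S_0)\) of the internal group \(\Aut_S(0)\).

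Next I would precompose the naturality equation \(m\langle{f_1,\mu c}\rangle=m\langle{\mu d,g_1}\rangle\) with \(k_R\). As \(k_R\) factors through both \(\Ker(d)\) and \(\Ker(c)\), the composites \(c\comp k_R\) and \(d\comp k_R\) factor through \(0\); combined with the previous paragraph, both \(\mu c\comp k_R\) and \(\mu d\comp k_R\) equal the ``constant identity loop'' \(e\comp(0\to S_0)\comp z\), where \(z\colon{\Aut_R(0)\to 0}\). Feeding these into the two unit laws of the internal category \(S\) — composing an arrow with the identity at its target, respectively its source, returns the arrow — collapses the left-hand side to \(f_1\comp k_R\) and the right-hand side to \(g_1\comp k_R\). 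Therefore \(f_1\comp k_R=g_1\comp k_R\), and since \(k_S\) is monic and both sides factor through it, we conclude \(\Aut_f(0)=\Aut_g(0)\).

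The genuine content is the single identification of \(\mu\)'s basepoint component with the group unit via initiality of \(0\); once that is secured, the statement reduces to the unit axioms for \(S\). What remains is bookkeeping: verifying the composability conditions of pullback \textbf{(1)} so that the pairings \(\langle{f_1 k_R,\mu c\,k_R}\rangle\) and \(\langle{\mu d\,k_R,g_1 k_R}\rangle\) are legitimate, and checking that each restriction lands where claimed. This diagrammatic argument is precisely the point-free form of the elementwise picture, in which the naturality square at a loop \(x\colon{0\to 0}\) reads \(\mu_0\comp f_1(x)=g_1(x)\comp\mu_0\) with \(\mu_0\) the identity, forcing \(f_1(x)=g_1(x)\).
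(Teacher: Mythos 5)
Your proposal is correct and follows essentially the same route as the paper: restrict the naturality condition (iii) along the inclusion of \(\Ker(d)\cap\Ker(c)\), observe that the \(\mu\)-terms become units (the paper does this implicitly by rewriting \(\mu c k\) as \(ecl\overline{f}\) and \(\mu d k\) as \(edl\overline{g}\), which is exactly your basepoint observation), and then apply the unit laws of \(S\) to collapse both sides to \(f_1k\) and \(g_1k\). Your explicit identification of the basepoint component of \(\mu\) with \(e\comp({0\to S_0})\) via initiality of \(0\) is a worthwhile clarification of a step the paper leaves tacit, but it is not a different argument.
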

\begin{proof}
Consider the diagram
\[
\xymatrix@=3em{\Ker(d)\cap\Ker(c) \ar@<.5ex>[r]^-{\overline{f}} \ar@<-.5ex>[r]_-{\overline{g}} \ar[d]_-k & \Ker(d)\cap \Ker(c) \ar[d]^-l\\
R_1 \ar@<.5ex>[r]^-{f_1} \ar@<-.5ex>[r]_-{g_1} \ar@<1ex>[d]^-{c} \ar@<-1ex>[d]_-{d} & S_1 \ar@<1ex>[d]^-{c} \ar@<-1ex>[d]_-{d}\\
R_0 \ar@<.5ex>[r]^-{f_0} \ar[u]|-{e} \ar@<-.5ex>[r]_-{g_0} \ar@{.>}[ur]|-{\mu} & S_0 \ar[u]|-{e}}
\]
in which \(k\) and \(l\) are the inclusions of \(\Ker(d)\cap\Ker(c)\) into \(R_1\) and \(S_1\), respectively.
We wish to show that \(\overline{f}=\overline{g}\), or equivalently, that \(l\overline{f}=l\overline{g}\), as \(l\) is a monomorphism. From Condition~(iii) we know that \(m\langle{f_1k,\mu c k}\rangle=m\langle{\mu d k, g_1 k}\rangle\). But since \(dk=0=ck\) and $dl=0=cl$, we can reformulate this as
\begin{align*}
 m\langle{f_1k,\mu c k}\rangle&=m\langle{l\overline{f},ecl\overline{f}}\rangle=m\langle{1_{S_1},ec}\rangle l\overline{f}=l\overline{f},\\
 m\langle{\mu d k,g_1k}\rangle&=m\langle{edl\overline{g},l\overline{g}}\rangle=m\langle{ed,1_{S_1}}\rangle l\overline{g}=l\overline{g}
\end{align*}
giving \(l\overline{f}=l\overline{g}\) as required.
\end{proof}

\section{The Galois group and the fundamental group}

Let \(\Gamma=(\C,\X,H,I,\eta, \epsilon, \E, \F)\) be an admissible Galois structure on a finitely complete pointed category \(\C\) with \(H\) a full inclusion, and assume that monadic extensions are pullback stable. Note that this excludes the classical Galois theory Example \ref{Examples} \eqref{fields}, but it includes Examples \ref{Examples} \eqref{groups} and \eqref{pointedtopological}, as well as all the Galois structures of Example \ref{Examples} \eqref{Mal'tsev} for which \(\C\) is pointed.

\begin{definition}\label{Definition Gal}\cite{Janelidze:Hopf}
For a normal extension \(p\colon{E\to B}\), its \defn{Galois group}
\[
\Gal(p,0)=\Aut_{\Gal(p)}(0)
\]
is the group of automorphisms at \(0\) of the Galois groupoid: 
\[
\xymatrix{& \R{p} \splitpullback \ar@<.5ex>[r]^-{d} \ar@<-.5ex>[r]_-{c} \ar[d]_-{\eta_{\R{p}}} & E \ar[d]^{\eta_{E}} \ar[r]^p & B\\
\Gal(p,0)=\Ker(Id)\cap \Ker(Ic) \ar[r] & I(\R{p}) \ar@<.5ex>[r]^-{I(d)} \ar@<-.5ex>[r]_-{I(c)} & I(E)}
\]
\end{definition}

The resulting functor 
\[
 \Gal(-,0)\colon {\NExt(\C)\to \Gp(\X)}
\]
has some very useful properties: it is a \emph{Baer invariant}~\cite{EverVdL1, Froehlich} with respect to the codomain functor \(\cod\colon {\NExt(\C)\to \C}\), in the sense that any two maps between normal extensions which agree on the codomains also induce the same map between the Galois groups. To show this, we will use some properties of Section~\ref{Section Groupoids}.

\begin{lemma}\label{Lemma Preservation of Homotopies}
If two internal functors \(f\), \(g\colon {R\to S}\) between internal categories with source and target morphisms \(d\), \(c\) being trivial coverings are naturally isomorphic, then the functors \(I(f)\), \(I(g)\colon {I(R)\to I(S)}\) are still naturally isomorphic.
\end{lemma}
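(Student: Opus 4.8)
The plan is to show that $I$ preserves the whole package of internal-categorical data, so that $I(\mu)$ itself is the required natural isomorphism from $I(f)$ to $I(g)$. The point that needs care is that $I$, being a left adjoint, does not preserve arbitrary pullbacks; but every pullback appearing in the definitions of internal functor and internal natural transformation is taken along a source or target morphism, hence along a trivial covering, and such pullbacks \emph{are} preserved by Lemma~\ref{Lemma I preserves pullbacks along trivial}. This is exactly where the hypothesis on $d$ and $c$ enters.

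First I would check that $I(R)$ and $I(S)$ are again internal categories and that $I(f)$, $I(g)$ are internal functors. The object of composable arrows $S_1\times_{S_0}S_1$ is a pullback along the trivial covering $d$ (equivalently $c$), so Lemma~\ref{Lemma I preserves pullbacks along trivial} yields a canonical isomorphism $I(S_1\times_{S_0}S_1)\iso I(S_1)\times_{I(S_0)}I(S_1)$, under which $I(m)$ becomes the multiplication of $I(S)$; the unit, associativity and functoriality laws are obtained simply by applying $I$ to the corresponding equations in $\C$, and likewise for $R$. I would then take $I(\mu)\colon{I(R_0)\to I(S_1)}$ as the candidate. Conditions (i) and (ii) of Definition~\ref{Definition-Homotopy} follow at once from applying $I$ to $d\mu=f_0$ and $c\mu=g_0$. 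For condition (iii), I would apply $I$ to the naturality square for $\mu$: since $I$ preserves the pullback $S_1\times_{S_0}S_1$, it sends the pairings $\langle{f_1,\mu c}\rangle$ and $\langle{\mu d,g_1}\rangle$ to $\langle{I(f_1),I(\mu)I(c)}\rangle$ and $\langle{I(\mu)I(d),I(g_1)}\rangle$, and it sends $m$ to the multiplication of $I(S)$; hence $I$ applied to $m\langle{f_1,\mu c}\rangle=m\langle{\mu d,g_1}\rangle$ is precisely condition (iii) for $I(\mu)$. Finally, as $\mu$ is an internal natural isomorphism it has an inverse $\nu$ with $m\langle{\mu,\nu}\rangle=ef_0$ and $m\langle{\nu,\mu}\rangle=eg_0$; applying $I$ to these two equations in the same way shows $I(\nu)$ is inverse to $I(\mu)$, so $I(\mu)$ is an internal natural isomorphism.

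The main obstacle is the bookkeeping in the first step: one must verify that the canonical comparison isomorphism $I(S_1\times_{S_0}S_1)\iso I(S_1)\times_{I(S_0)}I(S_1)$ is genuinely compatible with $I$ applied to each pairing and to $m$, so that the displayed equalities live in $I(S)$ rather than merely in $\C$. Once this compatibility is established from admissibility and Lemma~\ref{Lemma I preserves pullbacks along trivial}, every remaining verification is a purely formal transport, along $I$, of an identity already known to hold.
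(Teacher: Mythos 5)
Your proposal is correct and follows essentially the same route as the paper: both invoke Lemma~\ref{Lemma I preserves pullbacks along trivial} to see that $I$ preserves the pullbacks of composable arrows (hence the internal category structure and the multiplication), and then take $I(\mu)$ as the natural transformation between $I(f)$ and $I(g)$, with the defining conditions obtained by applying $I$ to the corresponding equations. Your extra verification that $I(\nu)$ inverts $I(\mu)$ is a harmless elaboration the paper leaves implicit.
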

\begin{proof}
Recall that \(I\) preserves pullbacks along trivial coverings, so \(I(R)\) and \(I(S)\) are still internal categories. In particular, \[I(S_1\times_{S_0}S_1)={I(S_1)\times_{I(S_0)} I(S_1)}\] and \(I(m)\) is the multiplication of \(I(S)\). 

Let \(\mu \colon {R_0\to S_1}\) be an internal natural isomorphism between \(f\) and \(g\). Then functoriality of \(I\) and the preservation of the multiplication ensures that \(I(\mu)\) is still an internal natural transformation.
\end{proof}

\begin{proposition}\label{Proposition-Baer-Invariant}
Let \(p\colon {E\to B}\) and \(p'\colon {E'\to B'}\) be normal extensions. Any two morphisms \((f,b)\colon p\to p'\) and \((g,b)\colon{p\to p'}\) in \(\NExt(\C)\) with the same codomain component induce the same morphism
\[
{\Gal(p,0)\to \Gal(p',0)}
\]
on the Galois groups.
\end{proposition}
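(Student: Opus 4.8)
The plan is to chain together the three lemmas of Section~\ref{Section Groupoids}, which have been set up precisely for this purpose. In outline: pass from the morphisms $(f,b)$ and $(g,b)$ to the internal functors they induce on the kernel pair groupoids, observe that these functors are naturally isomorphic, transport the natural isomorphism across $I$, and finally collapse it on the internal groups of automorphisms at $0$.

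First I would note that a morphism of normal extensions is just a commutative square, so $(f,b)$ and $(g,b)$ may be regarded as two morphisms from the arrow $p\colon E\to B$ to the arrow $p'\colon E'\to B'$ in the sense of Lemma~\ref{Lemma-Induced-Homotopy}, with $E$-components $f$ and $g$ and common $B$-component $b$. Since these two morphisms agree on the codomain side, Lemma~\ref{Lemma-Induced-Homotopy} at once yields an internal natural isomorphism between the induced internal functors $\R{p}\to \R{p'}$ on the kernel pairs (which are effective equivalence relations, so condition~(iii) is automatic and the natural transformation produced is $\langle f,g\rangle$).

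Next I would invoke normality. Because $p$ and $p'$ are normal extensions, their kernel pair projections are trivial coverings, so the hypotheses of Lemma~\ref{Lemma Preservation of Homotopies} are met. Applying $I$ therefore preserves the natural isomorphism, giving naturally isomorphic internal functors between $I(\R{p})=\Gal(p)$ and $I(\R{p'})=\Gal(p')$, which are exactly the functors induced on the Galois groupoids by $f$ and $g$. Finally, Lemma~\ref{Lemma-Same-On-Intersection} shows that naturally isomorphic functors induce the same morphism on the groups of automorphisms at $0$; since $\Gal(p,0)$ and $\Gal(p',0)$ are by definition these automorphism groups, this is precisely the claim that $f$ and $g$ induce the same map $\Gal(p,0)\to\Gal(p',0)$.

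The argument is genuinely distributed across the three lemmas, so no single step is a serious obstacle; the only points requiring care are the bookkeeping of which component plays the role of the codomain in Lemma~\ref{Lemma-Induced-Homotopy} (it is the $B$-component $b$, not the $E$-components $f$, $g$) and the recognition that normality is exactly what is needed to feed Lemma~\ref{Lemma Preservation of Homotopies}. The latter is the real crux: without the kernel pair projections being trivial coverings, $I$ need not preserve the relevant pullbacks, and the image $I(\R{p})$ would not even be guaranteed to be an internal groupoid, let alone one on which the natural isomorphism survives.
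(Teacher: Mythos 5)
Your proposal is correct and is exactly the paper's argument: the paper's own proof is a one-line citation of Lemmas~\ref{Lemma-Induced-Homotopy}, \ref{Lemma Preservation of Homotopies} and \ref{Lemma-Same-On-Intersection}, chained in precisely the order and manner you describe. Your bookkeeping (the common component $b$ playing the role of $f_0=g_0$, and normality supplying the trivial-covering hypothesis for Lemma~\ref{Lemma Preservation of Homotopies}) matches the intended reading.
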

\begin{proof}
 This follows from Lemmas~\ref{Lemma-Induced-Homotopy}, \ref{Lemma Preservation of Homotopies} and \ref{Lemma-Same-On-Intersection}.
\end{proof}

In particular, this means that any endomorphism \((f,1_B)\colon {p\to p}\) induces the \emph{identity} on the Galois group \(\Gal(p,0)\). This means that we can now sensibly introduce the following definition. Recall that a normal extension \(u\colon U\to B\) is called \defn{weakly universal} if it is a weak initial object in the full subcategory \(\NExt(B)\) of \((\C\downarrow B)\) given by all normal extensions of \(B\), i.e.~for every normal extension \(p\colon E\to B\) there exists a morphism \(e\colon U\to E\) such that \(p\comp e=u\).

\begin{definition}\label{Definition Fundamental group}\cite{Janelidze:Hopf}
Given an object \(B\) of \(\C\), its \defn{fundamental group (with coefficient functor \(I\))} is the Galois group
\[
 \pi_{1}(B,I)=\Gal(u,0)
\]
of some weakly universal normal extension \(u\colon{U\to B}\), assuming such exists.

\end{definition}
Note that \(\pi_1(B,I)\) is independent of the choice of weakly universal normal extension \(u\colon {U\to B}\), by Proposition~\ref{Proposition-Baer-Invariant} and weak universality of \(u\). Assuming a weakly universal normal extension \(u\colon U\to B\) exists for \emph{every}~\(B\), we moreover have: 
\begin{proposition}\label{Proposition Fundamental group functor}
 The above definition of fundamental group gives a functor
 \[
 \pi_1(-,I)\colon {\C\to \Gp(\X)}.
 \]
\end{proposition}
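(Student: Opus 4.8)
The plan is to fix, for each object $B$ of $\C$, a choice of weakly universal normal extension $u_B\colon U_B\to B$ (these exist by hypothesis), and to set $\pi_1(B,I):=\Gal(u_B,0)$; the real content of the proposition is then to define the action on morphisms and to check functoriality. The key tool throughout is the Baer-invariance Proposition~\ref{Proposition-Baer-Invariant}, which is what makes each construction below independent of the arbitrary choices feeding into it.

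To define $\pi_1(\beta,I)$ for a morphism $\beta\colon B\to B'$, I would first lift $\beta$ to a morphism of normal extensions $(f,\beta)\colon u_B\to u_{B'}$ in $\NExt(\C)$. Since $\Gamma$ is admissible and monadic extensions are pullback-stable, Lemma~\ref{Lemma Pullback Stable Normal} lets me pull back the normal extension $u_{B'}$ along $\beta$, obtaining a normal extension $\beta^*(u_{B'})\colon P\to B$ of $B$ together with the pullback projection $\pi\colon P\to U_{B'}$. Weak universality of $u_B$ then provides a morphism $e\colon U_B\to P$ over $B$ with $\beta^*(u_{B'})\comp e=u_B$, and I would put $f:=\pi\comp e$. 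The pullback square yields $u_{B'}\comp f=\beta\comp u_B$, so $(f,\beta)$ is genuinely a morphism $u_B\to u_{B'}$ in $\NExt(\C)$, and I would define $\pi_1(\beta,I):=\Gal((f,\beta),0)\colon \pi_1(B,I)\to\pi_1(B',I)$.

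Well-definedness is exactly where Proposition~\ref{Proposition-Baer-Invariant} does the work: any two lifts $(f,\beta)$ and $(f',\beta)$ of the \emph{same} $\beta$ share their codomain component, hence induce the same map on Galois groups, so $\pi_1(\beta,I)$ depends neither on the choice of $e$ nor on the chosen pullback $\beta^*(u_{B'})$. For functoriality, given $\beta\colon B\to B'$ and $\beta'\colon B'\to B''$ with chosen lifts $(f,\beta)$ and $(f',\beta')$, the composite $(f'f,\beta'\beta)\colon u_B\to u_{B''}$ is a lift of $\beta'\beta$; since $\Gal(-,0)$ is a functor $\NExt(\C)\to\Gp(\X)$ and, by Baer invariance, \emph{any} lift of $\beta'\beta$ computes $\pi_1(\beta'\beta,I)$, I obtain $\pi_1(\beta'\beta,I)=\Gal((f',\beta'),0)\comp\Gal((f,\beta),0)=\pi_1(\beta',I)\comp\pi_1(\beta,I)$. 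Likewise $(1_{U_B},1_B)$ is a lift of $1_B$, so $\pi_1(1_B,I)$ is the identity.

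Finally I would note that the functor is independent of the global choice of weakly universal normal extensions up to natural isomorphism: for two choices $u_B$, $\tilde u_B$ the mutually inverse comparison maps over $B$ induce, again via Proposition~\ref{Proposition-Baer-Invariant}, a canonical isomorphism $\Gal(u_B,0)\iso\Gal(\tilde u_B,0)$, whose naturality is once more a direct consequence of Baer invariance. The only genuine obstacle is producing the lift $f$, which rests on pullback-stability of normal extensions together with weak universality; everything afterwards is bookkeeping governed by the Baer-invariance property, which is precisely why that property was isolated beforehand.
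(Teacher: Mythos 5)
Your proof is correct and takes essentially the same route as the paper's: both pull back the chosen weakly universal normal extension along the given morphism (using Lemma~\ref{Lemma Pullback Stable Normal}), factor through it by weak universality to obtain a lift in $\NExt(\C)$, and then invoke Proposition~\ref{Proposition-Baer-Invariant} to get well-definedness and functoriality. Your write-up merely spells out in more detail the steps the paper compresses into its final sentence.
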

\begin{proof}
Consider \(f\colon {A\to B}\) in \(\C\), and let~\(u\colon{U\to B}\) and \(v\colon {V\to A}\) be weak\-ly universal normal extensions of \(B\) and \(A\), respectively. Pulling back \(u\) along~\(f\) gives another normal extension of \(A\) by Lemma~\ref{Lemma Pullback Stable Normal}, so \(v\) factors over it, giving a morphism \(v\to u\) which need not be unique. However, Proposition~\ref{Proposition-Baer-Invariant} ensures that any two such morphisms induce the same morphism on~\(\pi_1\). It is then clear that \(\pi_1(-,I)\) preserves identities and composition.
\end{proof}

\begin{remark}
Not every Galois structure has the property that every object admits a weakly universal normal extension into it. Note, however, that even when this is not the case, the fundamental group still defines a functor, but its domain is restricted to the full subcategory of \(\C\) of those \(B\) for which \(\pi_1(B,I)\) is defined. \end{remark}

\begin{examples}
\begin{enumerate}
\item For the Galois structure \(\Gamma\) of Example \ref{Examples} \eqref{groups}, there is a weakly universal normal extension for every group \(B\): if \(p\colon {P\to B}\) is a surjective group homomorphism with a free domain \(P\), then the induced central extension \(P/[\Ker(p),P]\to B\) is easily seen to be weakly universal. The fundamental group \(\pi_1(B,I)=\H_2(B)\) in this case is the second (integral) homology group of \(B\). (See \cite{Janelidze:Hopf}.)

\item More generally, for Galois structures \(\Gamma\) of the type considered in Example \ref{Examples} \eqref{Mal'tsev}, \(\NExt(B)\) is a reflective subcategory of \((\E\downarrow B)\) for every \(B\) (see \cite{EverMaltsev, Janelidze-Kelly:Maltsev}), and the reflection into \(\NExt(B)\) of any regular epimorphism \(P\to B\) with a projective domain \(P\) is weakly universal. Hence, if \(\C\) is pointed with enough projectives, \(\pi_1(B,I)\) is well defined for every \(B\). 

When \(\C\) is a semi-abelian category with a monadic forgetful functor to \(\Set\), then \(\pi_1(B,I)=\H_2(B,I)\) is the second Barr-Beck homology group of \(B\) with coefficient functor \(I\) (see \cite{EGVdL}).

\item 
For Example \ref{Examples} \eqref{topological}, not every locally connected topological space~\(B\) admits a weakly universal normal extension \(u\colon {U\to B}\). However, it is well known that there exists a (surjective) covering map \(u\colon U\to B\) with a simply connected domain \(U\) for every connected, locally path-connected and semi-locally simply connected space~\(B\) (see, for instance, \cite{Hatcher,Spanier}). Such a \(u\) has the following property: for every covering map \(f\colon A\to B\) and every pair of elements \(x\in U\) and \(y\in A\) in corresponding fibres there is a unique continuous map \(a\colon U\to A\) (actually, a covering map) such that \(u=fa\) and \(a(x)=y\). Hence such a \(u\) is in particular a regular covering map which is clearly a weakly universal normal extension.

Choosing base points \(x\in U\) and \(y\in B\) such that \(u(x)=y\), the map \(u\colon (U,x)\to (B,y)\) becomes a weakly universal normal extension with respect to the Galois structure \(\Gamma\) of Example \ref{Examples} \eqref{pointedtopological}. In fact, in this case it is even an \emph{initial} object of \(\NExt(B)\) (rather than merely a weakly initial one), which agrees with the usual terminology of calling such a \(u\) a \emph{universal} covering map. Now \(\pi_1((B,y),I)\) is the classical Poincar\'e fundamental group of \((B,x)\) (see~\cite[Chapter 6]{Borceux-Janelidze}).
\end{enumerate}
\end{examples}

\section{The fundamental group functor as a Kan extension of the Galois group functor}\label{Section first Kan extension}
Throughout this section and the next, \(\Gamma=(\C,\X,H,I,\eta,\epsilon,\E,\F)\) will, as before, be an admissible Galois structure on a finitely complete pointed category \(\C\) with~\(H\) a full inclusion, and such that monadic extensions are pullback stable. For simplicity we shall moreover assume that every object of \(\C\) admits a weakly universal normal extension into it. However, our results can easily be adapted to situations where this is not the case (see Section \ref{Topological spaces}).

In the diagram
\[
\xymatrix{& \NExt(\C) \ar@{}[d]|-{\Nearrow}_{\kappa} \ar[ld]_-{\cod} \ar[rd]^-{\Gal(-,0)} \\
\C \ar@{.>}[rr]_-{\pi_{1}(-,I)} && \Gp(\X) }
\]
we now know all ingredients except the natural transformation \[\kappa\colon {\pi_1(-,I)\comp \cod\To \Gal(-,0)}.\] For a normal extension \(p\colon {E\to B}\), we define the component 
\[
\kappa_p\colon {\pi_1(B,I)\to \Gal(p,0)}
\]
as \(\Gal((h,1_B),0)\colon {\Gal(u,0)\to \Gal(p,0)}\) for a weakly universal normal extension \(u\colon {U\to B}\) and any induced
\[
 \xymatrix{U \ar@{..>}[r]^h \ar[d]_u & E \ar[d]^p\\ B \ar@{=}[r] &B}
\]
in \(\NExt(\C)\). Again by Proposition~\ref{Proposition-Baer-Invariant}, any such \((h,1_B)\) will induce the \emph{same} morphism \(\Gal((h,1_B),0)=\kappa_p\). It is easy to check that \(\kappa\) is natural.

To prove that the above diagram really is a Kan extension, we just have to show that this natural transformation \(\kappa\) is universal.

\begin{theorem}\label{First Theorem}
The following is a Kan extension:
 \[
\xymatrix{& \NExt(\C) \ar@{}[d]|-{\Nearrow}_{\kappa} \ar[ld]_-{\cod} \ar[rd]^-{\Gal(-,0)} \\
\C \ar@{.>}[rr]_-{\pi_{1}(-,I)} && \Gp(\X) }
\]
\end{theorem}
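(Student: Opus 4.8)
The plan is to verify directly the universal property of the right Kan extension. Given any functor \(F\colon \C\to\Gp(\X)\) together with a natural transformation \(\alpha\colon F\comp\cod\To\Gal(-,0)\), I must exhibit a unique natural transformation \(\beta\colon F\To\pi_1(-,I)\) satisfying \(\kappa\comp(\beta\cdot\cod)=\alpha\), that is, \(\kappa_p\comp\beta_{\cod(p)}=\alpha_p\) for every object \(p\) of \(\NExt(\C)\). Throughout I fix, for each \(B\in|\C|\), the weakly universal normal extension \(u_B\colon U_B\to B\) used to compute \(\pi_1(B,I)=\Gal(u_B,0)\) as in Definition~\ref{Definition Fundamental group}.

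First I would pin down what \(\beta\) must be. The crucial observation is that \(\kappa_{u_B}\colon\pi_1(B,I)\to\Gal(u_B,0)\) is the identity: by construction \(\kappa_{u_B}=\Gal((h,1_B),0)\) for some endomorphism \((h,1_B)\) of \(u_B\) over \(B\), and Proposition~\ref{Proposition-Baer-Invariant} guarantees that any such endomorphism induces the identity on the Galois group. Reading the required equality \(\kappa_p\comp\beta_{\cod(p)}=\alpha_p\) at \(p=u_B\) therefore forces \(\beta_B=\alpha_{u_B}\). This already settles uniqueness, and it tells me to \emph{define} \(\beta_B:=\alpha_{u_B}\).

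It then remains to check that this \(\beta\) does the job, namely (a) that \(\kappa_p\comp\beta_B=\alpha_p\) for every normal extension \(p\colon E\to B\), and (b) that \(\beta\) is natural in \(B\). Both are immediate from the naturality of \(\alpha\). For (a), weak universality of \(u_B\) supplies an induced morphism \((h,1_B)\colon u_B\to p\) in \(\NExt(\C)\) with \(\kappa_p=\Gal((h,1_B),0)\); naturality of \(\alpha\) along \((h,1_B)\), whose codomain component is \(1_B\), reads \(\Gal((h,1_B),0)\comp\alpha_{u_B}=\alpha_p\), which is exactly \(\kappa_p\comp\beta_B=\alpha_p\). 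For (b), given \(f\colon A\to B\) the construction of \(\pi_1(f,I)\) in Proposition~\ref{Proposition Fundamental group functor} provides a morphism \((w,f)\colon u_A\to u_B\) in \(\NExt(\C)\) with \(\pi_1(f,I)=\Gal((w,f),0)\); naturality of \(\alpha\) along \((w,f)\), whose codomain component is \(f\), gives \(\Gal((w,f),0)\comp\alpha_{u_A}=\alpha_{u_B}\comp F(f)\), i.e.\ \(\pi_1(f,I)\comp\beta_A=\beta_B\comp F(f)\).

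The one point requiring care — and the only place the real content enters — is that the induced morphisms \(h\) and \(w\) above are \emph{not} uniquely determined by the data. What rescues the argument is precisely the Baer-invariance of the Galois group functor: Proposition~\ref{Proposition-Baer-Invariant} ensures that \(\Gal(-,0)\) sends any two parallel morphisms with equal codomain component to the same map, so that \(\kappa_p\), \(\pi_1(f,I)\) and the equality \(\kappa_{u_B}=1\) are all well defined independently of these choices. Granting this, the whole proof is nothing more than three instances of the naturality of \(\alpha\), with no computation left to do; I expect no genuine obstacle beyond keeping the (right, not left) direction of the Kan extension straight.
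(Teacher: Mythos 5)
Your proposal is correct and follows essentially the same route as the paper's own proof: both define the lifted transformation componentwise by evaluating the given one at a weakly universal normal extension, deduce the factorisation condition and naturality from naturality of the given transformation along the (non-unique, but by Proposition~\ref{Proposition-Baer-Invariant} harmless) induced morphisms, and obtain uniqueness from the invertibility of \(\kappa_{u_B}\). The only difference is presentational: you extract the definition of \(\beta\) from the uniqueness constraint at the outset, whereas the paper defines \(\alpha_B=\gamma_{u}\) first and verifies uniqueness last.
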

\begin{proof}
 Given another functor \(F\colon {\C\to \Gp(\X)}\) with a natural transformation
 \[
\xymatrix{& \NExt(\C) \ar@{}[d]|-{\Nearrow}_{\gamma} \ar[ld]_-{\cod} \ar[rd]^-{\Gal(-,0)} \\
\C \ar@{.>}[rr]_-{F} && \Gp(\X), }
\]
define \(\alpha\colon {F\To \pi_1(-,I)}\) by \(\alpha_B=\gamma_u\) for some weakly universal normal extension \(u\) of \(B\). This \(\alpha\) is really natural: given \(f\colon{A\to B}\) in \(\C\), the morphism
\[
\pi_1(f,I)\colon {\pi_1(A,I)\to \pi_1(B,I)}
\]
is defined as in Proposition~\ref{Proposition Fundamental group functor} using a morphism
\[
 \xymatrix{V \ar[d]_v \ar[r]^g & U \ar[d]^u\\ A \ar[r]_f & B}
\]
between weakly universal normal extensions of \(A\) and \(B\). Using naturality of \(\gamma\) on this morphism in \(\NExt(\C)\) gives naturality of \(\alpha\), because
\[\pi_1(f,I)=\Gal((g,f),0)\colon{\Gal(v,0)=\pi_1(A,I) \to \Gal(u,0)=\pi_1(B,I)}.
\]
Naturality of \(\gamma\) also implies that \(\kappa\comp \alpha_{\cod}=\gamma\): For each normal extension \(p\colon {E\to B}\), any morphism
\[
 \vcenter{\xymatrix{U \ar@{..>}[r]^h \ar[d]_u & E \ar[d]^p\\ B \ar@{=}[r] &B}}\qquad\text{ gives }\qquad
\vcenter{\xymatrix{ FB \ar@{=}[rr] \ar[d]_-{\gamma_u}|-{=}^-{\alpha_B} &&FB \ar[d]^-{\gamma_p} \\ \pi_1(B,I) \ar[rr]_-{\kappa_p=Gal((h,1_B),0)} && \Gal(p,0)}}
\]
and so \(\kappa_p\comp \alpha_B=\gamma_p\).

To see that \(\alpha\) is unique, notice that, for a weakly universal normal extension \(u\), the component \(\kappa_u\) is an isomorphism. So if \(\beta\colon {F\To \pi_1(-,I)}\) also satisfies \(\kappa\comp \beta_{\cod}=\gamma\), taking a weakly universal normal extension of \(B\) immediately implies \(\alpha_B=\beta_B\), for all \(B\).
\end{proof}

\begin{remark}
 In fact, in the definition of \(\pi_1(-,I)\) and the above proof of the universality of \(\kappa\), we have only used the following properties of \(\Gal(-,0)\) and \(\cod\):

 Given two functors
 \[
\xymatrix{& \N \ar[ld]_-{F} \ar[rd]^-{G} \\
\C \ && \Dc }
\] 
such that
\begin{enumerate}
 \item for all \(f,g\in \N\), \(F(f)=F(g)\) implies \(G(f)=G(g)\);
 \item for all \(C\in \C\), there exists \(U\in \N\) such that \(F(U)=C\) and, for all \(N\in \N\), the function 
 \[
 \Hom_\N(U,N)\to \Hom_\C(C,FN)
 \]
 giving the action of \(F\) is surjective.
\end{enumerate}
Then it is possible to define a functor \(H\colon {\C\to\Dc}\) via \(H(C)=G(U)\) and a natural transformation \(\kappa\colon {HF\To G}\) giving a Kan extension as we have done in our specific case above.
\end{remark}

\section{The fundamental group functor as a Kan extension of the kernel functor}\label{Section second Kan extension}

To compare this construction of the fundamental group given in the context of categorical Galois theory with other viewpoints on semi-abelian homology or with universality properties of connecting homomorphisms in long exact sequences, we actually need a slightly different Kan extension, namely
\[
\xymatrix{& \NExt(\C) \ar@{}[d]|-{\Nearrow}_(.45){\delta} \ar[ld]_-{\cod} \ar[rd]^-{\Ker} \\
\C \ar@{.>}[rr]_-{\pi_{1}(-,I)} && \X.}
\]
In this section we construct this Kan extension from the one we have already obtained. We first recall that the underlying object of a Galois group can also be calculated in another way:
\begin{lemma}\cite[Theorem 2.1]{Janelidze:Hopf}
Given a normal extension \(p\colon{E\to B}\), the underlying object of its Galois group can be computed as the intersection
\(
\Gal(p,0)=\Ker(p)\cap \Ker(\eta_E).
\)\noproof
\end{lemma}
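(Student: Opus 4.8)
The plan is to compute the \emph{underlying object} of $\Gal(p,0)$, which by Definition~\ref{Definition Gal} is $H$ applied to $\Ker(Id)\cap\Ker(Ic)\subseteq I(\R{p})$. Since $H$ is a full inclusion, hence a right adjoint preserving all finite limits, this underlying object is the intersection $\Ker(HId)\cap\Ker(HIc)$ of subobjects of $HI(\R{p})$. So the goal reduces to producing an isomorphism between this intersection and the subobject $\Ker(p)\cap\Ker(\eta_E)$ of $E$. The only structural input I would use, beyond naturality of $\eta$ and the limit-preservation of $H$, is normality of $p$: this guarantees that the kernel pair projections $d$, $c\colon \R{p}\to E$ are trivial coverings, so that the naturality square of $\eta$ at $d$ (top edge $\eta_{\R{p}}$, bottom edge $\eta_E$, verticals $d$ and $HI(d)$) is a pullback.

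First I would treat the kernel of a single projection. Taking kernels of the two vertical maps of that pullback square amounts to pasting it onto the defining pullback of $\Ker(d)$ along $0\to E$; since $\eta_E$ preserves the zero object, the resulting bottom composite $0\to E\comp\eta_E$ is again zero, and the cancellation law for pullbacks exhibits $\Ker(d)$ as the pullback of $HI(d)$ along $0\to HI(E)$, that is, as $\Ker(HId)$. Concretely, $\eta_{\R{p}}$ restricts to an isomorphism $\Ker(d)\iso\Ker(HId)$. Separately, the standard identification of kernel pairs in a pointed category gives an isomorphism $\Ker(d)\iso\Ker(p)$, realised by the other projection $c$ (its inverse being induced by $\langle 0,\ker p\rangle$). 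Thus I already have $\Ker(HId)\iso\Ker(d)\iso\Ker(p)$, and it only remains to see where the \emph{second} kernel $\Ker(HIc)$ lands under these identifications.

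For this I would invoke naturality of $\eta$ at $c$, namely $HI(c)\comp\eta_{\R{p}}=\eta_E\comp c$. Restricting to $\Ker(d)$ and composing with the isomorphism $c\colon \Ker(d)\iso\Ker(p)$ shows that the morphism $HI(c)$, transported back to $\Ker(p)$, is exactly $\eta_E\comp\ker p$. Hence the subobject $\Ker(HId)\cap\Ker(HIc)$ of $\Ker(HId)$ corresponds, via the two isomorphisms, to the kernel of $\eta_E\comp\ker p$; and the kernel of $\eta_E$ precomposed with the inclusion $\ker p\colon \Ker(p)\to E$ is precisely the intersection $\Ker(p)\cap\Ker(\eta_E)$, viewed as a subobject of $E$ (it is the pullback of the two monomorphisms $\ker p$ and $\ker\eta_E$ into $E$). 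Chaining the isomorphisms yields the desired $\Ker(HId)\cap\Ker(HIc)\iso\Ker(p)\cap\Ker(\eta_E)$.

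The routine bookkeeping aside, the main obstacle is to carry out the transport of the second kernel purely diagrammatically rather than by the transparent element-wise argument: one must verify that intersecting with $\Ker(HIc)$ and then applying the isomorphisms $\eta_{\R{p}}^{-1}$ and $c$ really does amount to forming the kernel of $\eta_E\comp\ker p$, which is exactly where naturality of $\eta$ and the care about which projection plays which role become essential. Everything else is a matter of assembling the pullback-cancellation isomorphism of the second paragraph with the elementary properties of kernel pairs in a pointed category.
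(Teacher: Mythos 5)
Your argument is correct. The paper itself gives no proof of this lemma --- it is quoted from Janelidze's \emph{Galois groups, abstract commutators and Hopf formula} with a \verb|\noproof| --- so there is nothing internal to compare against, but your verification is sound and uses exactly the right ingredients: since $p$ is normal, the kernel pair projection $d$ is a trivial covering, so the naturality square of $\eta$ at $d$ is a pullback; pasting with $0\to E\to HI(E)$ and cancelling identifies $\Ker(HId)$ with $\Ker(d)$ via $\eta_{\R{p}}$, while the usual pointed-category identification $\Ker(d)\iso\Ker(p)$ via $c$ handles the other leg. The transport of the second kernel is also right: naturality $HI(c)\comp\eta_{\R{p}}=\eta_E\comp c$ turns the restriction of $HI(c)$ to $\Ker(HId)$ into $\eta_E\comp\ker p$ under these isomorphisms, whose kernel is $\Ker(p)\cap\Ker(\eta_E)$. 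Note that you only need the paper's standing hypotheses ($\C$ finitely complete and pointed, $H$ a full limit-preserving inclusion, so that $\X$ is pointed with the same zero object and kernels in $\X$ agree with those in $\C$); admissibility plays no role in this particular computation.
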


This lemma implies that there is a component-wise monic natural transformation \[\iota\colon {U\comp \Gal(-,0)\To \Ker}\] from the functor giving the underlying object of the Galois group to the kernel functor. 
\[
 \xymatrix@!0@=6em{& \NExt(\C) \ar[dl]_{\cod} \ar[dr]|{U\comp\Gal(-,0)}^(.35)*[u]{\scriptstyle\iota}^(.4){\Nearrow} \ar@/^2pc/[dr]^{\Ker}& \\
 \C \ar@{..>}[rr]_{\pi_1(-,I)} \ar@{}[urr]_(.45){\Nearrow}|(.45){\kappa} && \X}
\]
It is clear that the big triangle in this diagram is still a Kan extension, forgetting only the internal group structure in the Kan extension of Section~\ref{Section first Kan extension}, since this internal group structure is not used anywhere in the proof. We now show that, for any functor \(F\colon {\C\to \X}\), any natural transformation \(\gamma\colon{F\comp \cod\To \Ker}\) factors over \(\iota\). Then universality of \(\kappa\) implies that \(\delta=\iota\comp \kappa\) also defines a Kan extension. However, we need a small extra condition to make this work: we now assume that 
\begin{quote}
\emph{all morphisms of the kind \(IE\to 0\) are in the class \(\F\).}
\end{quote}
Being split epimorphisms, this implies that they are monadic extensions (see \cite{Janelidze-Tholen2}), hence normal extensions, since the kernel pair projections are clearly trivial coverings, as they are in $\X$. Notice that this is indeed the case for all of our examples. 

\begin{lemma}
Let \(F\colon {\C\to \X}\) be a functor and \(\gamma\colon{F\comp \cod\To \Ker}\) a natural transformation.
 For any normal extension \(p\colon {E\to B}\), the component \(\gamma_p\) factors over the inclusion \(\Ker(p)\cap \Ker(\eta_E)\to \Ker(p)\).
\end{lemma}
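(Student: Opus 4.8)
The plan is to obtain the factorization from two instances of the naturality of \(\gamma\), applied to a single auxiliary normal extension. The newly imposed hypothesis that every morphism \(IE\to 0\) lies in \(\F\) guarantees, by exactly the reasoning given just above for \(IE\to 0\), that \(q\colon HI(E)\to 0\) is a normal extension; its kernel is the whole object, \(\Ker(q)=HI(E)\), the inclusion \(\Ker(q)\hookrightarrow HI(E)\) being the identity. I would then consider the two morphisms
\[
(\eta_E,!)\colon p\to q \qquad\text{and}\qquad (0,!)\colon p\to q
\]
in \(\NExt(\C)\), where \(!\colon B\to 0\) is the unique morphism and \(0\colon E\to HI(E)\) is the zero morphism. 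Both squares commute trivially, since any two morphisms into \(0\) coincide; crucially, these two morphisms of normal extensions share the \emph{same} codomain component \(!\).

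Next, applying the naturality square of \(\gamma\) to each of these morphisms gives
\[
\Ker(\eta_E,!)\comp\gamma_p=\gamma_q\comp F(!)=\Ker(0,!)\comp\gamma_p,
\]
the middle expression being common precisely because both morphisms have codomain component \(!\). Now \(\Ker(\eta_E,!)\) is, by definition of the kernel functor on morphisms, the restriction of \(\eta_E\) to kernels, i.e.\ the composite \(\Ker(p)\hookrightarrow E\to HI(E)\) (using \(\eta_E\)); whereas \(\Ker(0,!)\) is the zero map. Writing \(k\colon\Ker(p)\hookrightarrow E\) for the inclusion, the displayed identity therefore reads \(\eta_E\comp k\comp\gamma_p=0\), so \(k\comp\gamma_p\) factors through \(\Ker(\eta_E)\). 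Since \(k\comp\gamma_p\) evidently also factors through \(\Ker(p)\), and the two factorizations agree as maps into \(E\), the universal property of the intersection \(\Ker(p)\cap\Ker(\eta_E)\) (a pullback of subobjects of \(E\)) yields the required factorization of \(\gamma_p\) through \(\Ker(p)\cap\Ker(\eta_E)\to\Ker(p)\), which by the preceding lemma is exactly the inclusion \(\Gal(p,0)\to\Ker(p)\).

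The main obstacle is conceptual rather than computational. The tempting shortcut—observing that \(\gamma_q\comp F(!)\) factors through \(F(0)\) and concluding it is zero—is \emph{not} available, because \(F\) is an arbitrary functor and need not be pointed, so \(F(0)\) need not be a zero object and \(\gamma_q\comp F(!)\) need not vanish on its own. The whole point is to introduce the second, zero-topped morphism \((0,!)\colon p\to q\), which has the same codomain component as \((\eta_E,!)\) and hence the same image under \(\gamma_q\comp F(!)\) by naturality; it is this comparison that forces \(\eta_E\comp k\comp\gamma_p=0\). Correspondingly, the sole place where the new hypothesis is genuinely needed is in legitimising \(q\colon HI(E)\to 0\) as an object of \(\NExt(\C)\), so that \(\gamma\) has a component there and naturality may be invoked.
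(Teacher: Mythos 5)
Your proof is correct and follows essentially the same route as the paper: both hinge on the auxiliary normal extension \(IE\to 0\) (legitimised by the new hypothesis on \(\F\)) together with naturality of \(\gamma\) at that object, reducing the claim to \(\eta_E\comp \ker{p}\comp \gamma_p=0\). The only difference is cosmetic: where you compare the two parallel morphisms \((\eta_E,!)\) and \((0,!)\colon p\to (IE\to 0)\) sharing the codomain component \(!\), the paper instead adjoins a third normal extension \(0\to 0\) and uses the morphism \(0\to IE\) over \(0\) to force \(\gamma_{IE\to 0}=0\) outright --- both versions correctly avoid assuming that \(F\) preserves the zero object.
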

\begin{proof}
 Since the above inclusion is the kernel of \(\xymatrix@1{\Ker(p) \ar[r]^-{\ker{p}} & E \ar[r]^-{\eta_E} & IE}\), it is sufficient to show that the composite
 \[
 \xymatrix@1{FB\ar[r]^-{\gamma_p} & \Ker(p) \ar[r]^-{\ker{p}} & E \ar[r]^-{\eta_E} & IE}
 \]
is zero. To do this, consider the three normal extensions
\[
 \xymatrix{E \ar[d]_p \ar[r]^{\eta_E} & IE \ar[d] & 0 \ar[d] \ar[l]\\
 B \ar[r] & 0 \ar@{=}[r] & 0 }
\]
with the given morphisms between them. Naturality of \(\gamma\) gives
\[
 \xymatrix{FB \ar[d]_{\gamma_p} \ar[r] & F0 \ar[d]^0 \ar@{=}[r] & F0 \ar[d] \\
 \Ker(p) \ar[r]_-{\eta_E\circ \ker{p}} & IE & 0 \ar[l],}
\]
which shows that \(\gamma_p\) does indeed factor over \(\Ker(p)\cap\Ker(\eta_E)\to \Ker(p)\).
\end{proof}

So, using universality of \(\kappa\) and this lemma, we obtain
\begin{theorem}\label{Theorem}
The diagram
 \[
\xymatrix{& \NExt(\C) \ar@{}[d]|-{\Nearrow}_(.45){\delta} \ar[ld]_-{\cod} \ar[rd]^-{\Ker} \\
\C \ar@{.>}[rr]_-{\pi_{1}(-,I)} && \X }
\]
is a Kan extension.\noproof
\end{theorem}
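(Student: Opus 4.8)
The plan is to transport the universal property established in Theorem~\ref{First Theorem} across the component-wise monic \(\iota\colon{U\comp\Gal(-,0)\To\Ker}\), setting \(\delta=\iota\comp\kappa\). As already noted before the statement, discarding the internal group structure in Theorem~\ref{First Theorem} shows that the outer triangle with apex functor \(U\comp\Gal(-,0)\) and comparison cell \(\kappa\) is itself a Kan extension into \(\X\); none of that proof used the group structure. So the whole task reduces to showing that postcomposition with the monomorphism \(\iota\) upgrades this Kan extension for \(U\comp\Gal(-,0)\) to the claimed one for \(\Ker\). The mechanism will be a bijection, for each test datum, between factorizations through \(\kappa\) and factorizations through \(\delta\), induced by the monicity of \(\iota\).

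Concretely, I would fix a functor \(F\colon{\C\to\X}\) and a natural transformation \(\gamma\colon{F\comp\cod\To\Ker}\). First I invoke the preceding lemma, which says that each component \(\gamma_p\) factors through the inclusion \(\iota_p\colon{\Ker(p)\cap\Ker(\eta_E)\to\Ker(p)}\); because \(\iota_p\) is monic, the factorization \(\overline{\gamma}_p\colon{FB\to U\Gal(p,0)}\) is \emph{unique}. The next step is to check that the family \(\overline{\gamma}=(\overline{\gamma}_p)\) assembles into a natural transformation \(\overline{\gamma}\colon{F\comp\cod\To U\comp\Gal(-,0)}\). Here the monicity of \(\iota\) does the real work: for a morphism \((f,b)\colon{p\to p'}\) in \(\NExt(\C)\), I would postcompose the desired naturality square for \(\overline{\gamma}\) with the monomorphism \(\iota_{p'}\) and then rewrite, using naturality of \(\iota\) and of \(\gamma\), to land on the naturality square of \(\gamma\); cancelling \(\iota_{p'}\) delivers naturality of \(\overline{\gamma}\).

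With \(\overline{\gamma}\) available, the universal property of \(\kappa\) from Theorem~\ref{First Theorem}, in its underlying-object form, provides a unique \(\alpha\colon{F\To\pi_1(-,I)}\) with \(\kappa\comp\alpha_{\cod}=\overline{\gamma}\). Composing with \(\iota\) gives \(\delta\comp\alpha_{\cod}=\iota\comp\kappa\comp\alpha_{\cod}=\iota\comp\overline{\gamma}=\gamma\), which is existence. For uniqueness, suppose \(\beta\colon{F\To\pi_1(-,I)}\) also satisfies \(\delta\comp\beta_{\cod}=\gamma\). Then \(\iota\comp\kappa\comp\beta_{\cod}=\gamma=\iota\comp\overline{\gamma}\), and since \(\iota\) is component-wise monic I may cancel it to obtain \(\kappa\comp\beta_{\cod}=\overline{\gamma}\); the uniqueness clause of Theorem~\ref{First Theorem} then forces \(\beta=\alpha\).

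I expect the only genuinely delicate point to be the naturality of the lifted cell \(\overline{\gamma}\); everything else is a formal consequence of \(\iota\) being a component-wise monic natural transformation, together with the Kan extension already secured in Theorem~\ref{First Theorem} and the factorization guaranteed by the preceding lemma. In other words, the entire content is concentrated in the two facts that \(\Gal(p,0)\) sits inside \(\Ker(p)\) and that every test cell \(\gamma_p\) actually lands in that subobject; once these are granted, the proof is pure mono-cancellation.
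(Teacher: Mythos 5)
Your proposal is correct and follows exactly the route the paper intends: the paper states this theorem with no written proof precisely because it takes \(\delta=\iota\comp\kappa\), uses the preceding lemma to factor any test cell \(\gamma\) through the component-wise monic \(\iota\), and then transports the universal property of \(\kappa\) (in its underlying-object form) by mono-cancellation. Your write-up simply makes explicit the naturality check for \(\overline{\gamma}\) and the uniqueness argument that the paper leaves implicit.
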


\section{When normal extensions are reflective}
Assume that \(\C\) is a semi-abelian category with enough regular projectives, that \(\X\) is a Birkhoff subcategory of \(\C\), and that \(\E\) and \(\F\) consist of all regular epimorphisms (so we are in the situation of Example \ref{Examples} \eqref{Mal'tsev}). It was shown in \cite{GVdL2} that there is a Kan extension 
 \[
\xymatrix{& \Ext_{\Gamma}(\C) \ar@{}[d]|-{\Nearrow}_(.45){\partial} \ar[ld]_-{\cod} \ar[rd]^-{\Ker\circ I_1} \\
\C \ar@{.>}[rr]_-{\pi_{1}(-,I)=\H_2(-,I)} && \X. }
\]
Here \(\Ext_{\Gamma}(\C)\) is the full subcategory of \(\Arr(\C)\) given by all monadic extensions,
\[
I_1\colon \Ext_{\Gamma}(\C)\to\NExt(\C)
\]
is left adjoint to the inclusion functor \(\NExt(\C)\to \Ext_{\Gamma}(\C)\) and, for every monadic extension \(p\colon E\to B\), the morphism \(\partial_{p}\colon \H_2(B,I)\to \Ker(I_1(f))\) is a connecting morphism in the long exact homology sequence associated with \(f\) and \(I\). In order to deduce this result from ours, we need a lemma.

\begin{lemma}\label{Kanreflection}
If the left hand triangle
 \[
\xymatrix{& \N \ar@{}[d]|-{\Nearrow}_(.45){\delta} \ar[ld]_-{F} \ar[rd]^-{G} \\
\C \ar@{.>}[rr]_-{K} && \Dc } \qquad \qquad
\xymatrix{& \M \ar@{}[d]|-{\Nearrow}_(.45){\delta_L} \ar[ld]_-{F\circ L} \ar[rd]^-{G\circ L} \\
\C \ar@{.>}[rr]_-{K} && \Dc }
\]
is a Kan extension and the functor \(L\colon \M\to \N\) admits a fully faithful right adjoint, then the right hand triangle is a Kan extension as well.
\end{lemma}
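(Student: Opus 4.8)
The diagrams display right Kan extensions, so I read both \(\delta\) and \(\delta_L\) as the universal 2-cells \(K\comp F\To G\) and \(K\comp F\comp L\To G\comp L\), the latter being the whiskering \(\delta L\). Unwinding the definition, \((K,\delta)\) being a Kan extension says that for every functor \(K'\colon\C\to\Dc\) the assignment \(\alpha\mapsto\delta\comp(\alpha F)\) is a bijection
\[
\mathrm{Nat}(K',K)\xrightarrow{\ \iso\ }\mathrm{Nat}(K'\comp F,G),
\]
where \(\alpha F\) denotes whiskering. The analogous assignment for the right-hand triangle sends \(\alpha\) to \(\delta_L\comp(\alpha FL)=\bigl(\delta\comp(\alpha F)\bigr)L\), so it factors as
\[
\mathrm{Nat}(K',K)\xrightarrow{\ \iso\ }\mathrm{Nat}(K'\comp F,G)\xrightarrow{\ (-)L\ }\mathrm{Nat}(K'\comp F\comp L,G\comp L).
\]
Since the first map is already a bijection, the whole statement reduces to showing that whiskering by \(L\) is a bijection.

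First I would isolate this as a general fact: for any parallel functors \(P,Q\colon\N\to\Dc\), whiskering \((-)L\colon\mathrm{Nat}(P,Q)\to\mathrm{Nat}(P\comp L,Q\comp L)\) is a bijection as soon as \(L\) admits a fully faithful right adjoint. Write \(R\colon\N\to\M\) for this right adjoint, with unit \(\theta\colon 1_\M\To R\comp L\) and counit \(\epsilon\colon L\comp R\To 1_\N\); full faithfulness of \(R\) means exactly that \(\epsilon\) is a natural isomorphism. Using this, I would write down an explicit candidate inverse
\[
\Phi(\sigma)=(Q\epsilon)\comp(\sigma R)\comp(P\epsilon)^{-1}\colon P\To Q,
\]
for \(\sigma\colon P\comp L\To Q\comp L\); this makes sense precisely because \(\epsilon\), and hence \(P\epsilon\), is invertible. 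Applying it with \(P=K'\comp F\) and \(Q=G\) will give the bijection needed above.

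It then remains to verify that \(\Phi\) is a two-sided inverse to \((-)L\). For the identity \(\Phi(\tau L)=\tau\) (with \(\tau\colon P\To Q\)) I would use naturality of \(\tau\) along the counit components \(\epsilon_n\colon LRn\to n\) together with \(\epsilon_n\comp(\epsilon_n)^{-1}=1\); for \(\bigl(\Phi(\sigma)\bigr)L=\sigma\) I would use the triangle identity \(\epsilon_{Lm}\comp L\theta_m=1_{Lm}\) (which identifies \((\epsilon_{Lm})^{-1}\) with \(L\theta_m\)) followed by naturality of \(\sigma\) along the unit components \(\theta_m\colon m\to RLm\). Chaining the two bijections then produces, for each \((K',\gamma)\) with \(\gamma\colon K'\comp F\comp L\To G\comp L\), a unique \(\alpha\colon K'\To K\) satisfying \(\delta_L\comp(\alpha FL)=\gamma\); concretely \(\alpha\) is the comparison supplied by the left-hand Kan extension applied to \(\Phi(\gamma)\).

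The argument is a formal two-dimensional diagram chase and I expect no serious obstacle. The one point that genuinely needs care—and where the hypothesis is really consumed—is that it is full faithfulness of the \emph{right} adjoint, i.e.\ invertibility of the \emph{counit}, that is used (to define \(\Phi\) and via the triangle identity); full faithfulness of \(L\) itself would not suffice. Keeping the variances of the triangle identities and of the two naturality squares aligned is the only place a routine error could slip in.
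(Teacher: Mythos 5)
Your proof is correct, and it reaches the conclusion by a genuinely different route from the paper's. You rightly read both triangles as right Kan extensions, factor the comparison map for the right-hand triangle as the bijection $\mathrm{Nat}(K',K)\iso\mathrm{Nat}(K'\comp F,G)$ supplied by the left-hand one followed by whiskering with $L$, and then reduce everything to the general fact that $(-)L\colon\mathrm{Nat}(P,Q)\to\mathrm{Nat}(P\comp L,Q\comp L)$ is bijective when $L$ has a fully faithful right adjoint; your explicit inverse $\Phi(\sigma)=(Q\epsilon)\comp(\sigma R)\comp(P\epsilon)^{-1}$ and the two verifications you sketch (naturality of $\tau$ against $\epsilon$ for one composite, the triangle identity $\epsilon_{Lm}\comp L\theta_{m}=1_{Lm}$ followed by naturality of $\sigma$ against $\theta$ for the other) all check out. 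The paper instead stays entirely in the language of Kan-extension triangles: it applies \cite[Proposition 3 in X.7]{MacLane} twice to see that $(G\comp L,G\epsilon)$ is the right Kan extension of $G$ along $R$ and that $(G,G\epsilon)$ is the right Kan extension of $G$ along $L\comp R$ (the latter because $L\comp R$ is right adjoint to $1_{\N}$ when $R$ is fully faithful), observes that the two pasted diagrams have the same outer triangle, which is a Kan extension as a composite of two Kan extensions, and then cancels the top triangle of the right-hand pasting to isolate the desired one. The two arguments consume the hypothesis at exactly the same point (invertibility of the counit plus a triangle identity); yours buys a self-contained and reusable statement about whiskering at the cost of an explicit two-dimensional computation, while the paper's is shorter on the page because that computation is delegated to the cited proposition and to the standard pasting and cancellation laws for Kan extensions.
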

\begin{proof}
Write \(R\) for the fully faithful right adjoint of \(L\), and \(\epsilon\colon LR\To 1_{\M}\) for the counit. By \cite[Proposition 3 in X.7]{MacLane}, the natural transformation \(G\epsilon\colon GLR\To G\) defines a Kan extension, as pictured in the top triangle of the right hand diagram:
 \[
\xymatrix{&& \N \ar@{}[dd]|-{\Nearrow}_(.45){G\epsilon} \ar[ld]_-{LR} \ar[rrdd]^-{G} \\
& \N \ar@{}[dr]|-{\Nearrow}_(.40){\delta} \ar[drrr]^G \ar[ld]_F &&\\
\C \ar@{.>}[rrrr]_-{K} &&&& \Dc } \;\;
\xymatrix{&& \N \ar@{}[dd]|-{\Nearrow}_(.45){G\epsilon} \ar[ld]_-{R} \ar[rrdd]^-{G} \\
& \M \ar@{}[dr]|-{\Nearrow}_(.40){\delta_L} \ar[drrr]^{GL} \ar[ld]_{FL} &&\\
\C \ar@{.>}[rrrr]_-{K} &&&& \Dc } 
\]
We want the bottom triangle in the right hand diagram to be a Kan extension as well. Since \(\epsilon\) is a natural isomorphism, this will be the case if the outer triangle and the natural transformation \({G\epsilon \comp \delta_{LR}}\colon {KFLR\To G}\) form a Kan extension. And indeed this is true, since the two outer triangles coincide, and in the left hand diagram both triangles are Kan extensions: the bottom one by assumption and the top one again by \cite[Proposition 3 in X.7]{MacLane}, because \(LR\colon \N\to \N\) is right adjoint to the identity functor, since \(R\) is fully faithful. 
\end{proof}

Theorem \ref{Theorem} and Lemma \ref{Kanreflection} imply in particular:
\begin{corollary}\label{oldKan}
Under the assumptions of Section \ref{Section second Kan extension}, and when, moreover, the inclusion functor \(\NExt(\C)\to \Ext_{\Gamma}(\C)\) admits a left adjoint
\[
I_1\colon \Ext_{\Gamma}(\C)\to \NExt(\C),
\]
the diagram
\[
\xymatrix{& \Ext_{\Gamma}(\C) \ar@{}[d]|-{\Nearrow}_(.45){\delta_{I_1}} \ar[ld]_-{\cod} \ar[rd]^-{\Ker\circ I_1} \\
\C \ar@{.>}[rr]_-{\pi_{1}(-,I)} && \X }
\]
is a Kan extension.
\end{corollary}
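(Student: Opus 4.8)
The plan is to obtain this corollary as a direct application of Lemma~\ref{Kanreflection}, using for the left-hand Kan extension the one already supplied by Theorem~\ref{Theorem}. Concretely, I would instantiate the lemma with \(\N=\NExt(\C)\), \(\Dc=\X\), \(F=\cod\), \(G=\Ker\) and \(K=\pi_1(-,I)\), taking for \(\delta\) the universal natural transformation of Theorem~\ref{Theorem}; that theorem already guarantees this left-hand triangle is a Kan extension, so nothing further is needed there. For the functor \(L\) of the lemma I would take \(L=I_1\colon{\Ext_{\Gamma}(\C)\to\NExt(\C)}\), so that \(\M=\Ext_{\Gamma}(\C)\). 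The whole argument then reduces to matching the data.

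It remains to check the hypothesis of Lemma~\ref{Kanreflection} and to identify the two composites in its right-hand triangle with the functors appearing in the corollary. First, the lemma asks that \(L\) admit a fully faithful right adjoint: by hypothesis \(I_1\) is left adjoint to the inclusion \(\NExt(\C)\to\Ext_{\Gamma}(\C)\), and this inclusion is fully faithful simply because \(\NExt(\C)\) is, by definition, a \emph{full} subcategory of \(\Ext_{\Gamma}(\C)\) (every normal extension being in particular a monadic extension). Second, I would verify that \(G\comp L\) and \(F\comp L\) are exactly the legs of the corollary. The equality \(G\comp L=\Ker\comp I_1\) holds by definition, while \(F\comp L=\cod\comp I_1=\cod\) requires that the reflection \(I_1\) be \emph{vertical}, i.e.\ that it preserve codomains: for each extension \(p\colon{E\to B}\) the unit \(p\To I_1(p)\) is a morphism over \(B\), so \(\cod(I_1(p))=B\), and likewise on morphisms. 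Granting this, the right-hand triangle of Lemma~\ref{Kanreflection} becomes precisely the diagram of the corollary, with \(\delta_{I_1}=\delta_L\), and the lemma delivers the asserted Kan extension.

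Everything above is formal bookkeeping around Lemma~\ref{Kanreflection} and Theorem~\ref{Theorem}, save for the single structural point that carries real content, namely the verticality \(\cod\comp I_1=\cod\); this is the step I expect to be the main (and essentially the only) obstacle. In the situation at hand it holds because \(I_1\) is computed fibrewise, as the reflection of each \((\E\downarrow B)\) onto its full subcategory \(\NExt(B)\) for a fixed base \(B\) — these fibrewise reflections existing and gluing together by pullback-stability of normal extensions (Lemma~\ref{Lemma Pullback Stable Normal}) — so that the unit indeed has identity codomain component. Rather than reprove this, I would record it as the known defining property of the normal-reflection functor (cf.~\cite{EverMaltsev, Janelidze-Kelly:Maltsev}), since it is exactly the construction by which \(I_1\) is obtained in the first place.
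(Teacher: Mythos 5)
Your proposal matches the paper's proof in structure: both reduce the corollary to Lemma~\ref{Kanreflection} applied to the Kan extension of Theorem~\ref{Theorem}, with the inclusion \(\NExt(\C)\to\Ext_{\Gamma}(\C)\) as the fully faithful right adjoint, so that the only real content is the verticality \(\cod\comp I_1=\cod\). The one divergence is how that verticality is justified: the paper deduces it abstractly from the facts that identities are normal extensions and that \(\NExt(\C)\) is a replete full subcategory of \(\Ext_{\Gamma}(\C)\) (via Corollary 5.2 of \cite{Im-Kelly}), which applies to \emph{any} left adjoint \(I_1\), whereas your appeal to the fibrewise construction of \(I_1\) over each base \(B\) imports an assumption beyond the corollary's bare hypothesis that a left adjoint exists --- harmless in the known examples, but strictly less general than the statement being proved.
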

\begin{proof}
It suffices to observe that \(I_1\) leaves the codomains intact since every identity morphism is a normal extension and \(\NExt(\C)\) is a replete subcategory of \(\Ext_{\Gamma}(\C)\) (see Corollary 5.2 in \cite{Im-Kelly}).
\end{proof}
The inclusion functor \(\NExt(\C)\to \Ext_{\Gamma}(\C)\) admits a left adjoint not only in the semi-abelian case mentioned above, but more generally, when\-ever~\(\C\) is an exact Mal'tsev category, \(\X\) is a Birkhoff subcategory and \(\E\) and~\(\F\) consist of all regular epimorphisms (see \cite{EverMaltsev}). Another class of examples is given in \cite{Mathieu}.

\section{Exact homotopy sequence}\label{Topological spaces}
As remarked above, the Galois structure \(\Gamma\) of Example \ref{Examples} \eqref{pointedtopological} satisfies all conditions assumed in Sections \ref{Section first Kan extension} and \ref{Section second Kan extension}, except one: it is admissible, the category \(\LoCo_*\) is finitely complete and pointed, the discrete topology functor
\[
\dis\colon \Set_*\to \LoCo_*
\]
is fully faithful, monadic extensions are pullback stable and, for every pointed set \((X,x)\), the map \((X,x)\to 0\) is in \(\F\) (since here \(\F\) consists of \emph{all} base-point preserving maps); yet not every pointed topological space admits a weakly universal normal extension into it. We do know, however, that a \emph{universal} normal extension exists for every connected, locally path connected, semi-locally simply connected space \(B\) with base-point \(y\in B\), namely its universal covering map in the usual topological sense: a covering map \(u\colon (U,w)\to (B,y)\) with \(U\) connected and simply connected. Theorems \ref{First Theorem} and \ref{Theorem} and their proofs can easily be adapted to this situation. Thus we obtain Kan extensions
 \[
\xymatrix@C=1em{& \overline{\NExt(\LoCo_*)} \ar@{}[d]|-{\Nearrow}_{\kappa} \ar[ld]_-{\cod} \ar[rd]^-{\Gal(-,0)} \\
\overline{\LoCo}_* \ar@{.>}[rr]_-{\pi_{1}(-,\pi_0)} && \Gp}\quad 
\xymatrix@C=1em{& \overline{\NExt(\LoCo_*)} \ar@{}[d]|-{\Nearrow}_(.45){\delta} \ar[ld]_-{\cod} \ar[rd]^-{\Ker} \\
\overline{\LoCo}_* \ar@{.>}[rr]_-{\pi_{1}(-,\pi_0)} && \Set_* }
\]
where \(\overline{\LoCo}_*\) is the full subcategory of \(\LoCo_*\) consisting of all connected, locally path connected, semi-locally simply connected pointed spaces, and the full subcategory \(\overline{\NExt(\LoCo_*)}\) of \(\NExt(\LoCo_*)\) is determined by those  normal extensions whose codomain is in \(\overline{\LoCo}_*\). Notice that \(\Gp(\Set_*)\simeq\Gp(\Set)\simeq\Gp\). 

Now let \(p\colon (E,x)\to (B,y)\) be a \(\Gamma\)-normal extension of a connected, locally path-connected, semi-locally simply connected pointed space \((B,y)\) with kernel \((F,x)\) (meaning in this context of course the fibre over \(y\)). Let \(u\) be the universal covering map \((U,w)\to (B,y)\), write \(e\) for the unique continuous base-point preserving map \((U,w)\to (E,p)\) such that \(pe=u\) and recall that it is a covering map. Since \(U\) is connected, the image of \(e\) is contained in the connected component \(E_x\) of \(x\) and the left hand triangle restricts to the commutative right hand triangle 

\[
\xymatrix{
(U,w) \ar[d]_e \ar[rd]^u & \\
(E,x) \ar[r]_p & (B,y)}\quad\quad\quad\quad
\xymatrix{
(U,w) \ar[d]_{e'} \ar[rd]^u & \\
(E_x,x) \ar[r]_{p'} & (B,y)}
\]
Now \(e'\) is still a covering map, and it is surjective since its codomain is connected---the image of a covering map is always both open and closed. Moreover, since~\(U\) is connected and simply connected, \(e'\) is the universal covering map of \((E_x,x)\). Taking kernels yields an exact sequence of pointed sets
\[
0\to \Ker(e') \to \Ker(u) \to \Ker(p') \to 0
\] 
hence an exact sequence of groups
\[
0\to \pi_1(E,x)\to\pi_1(B,y)\to (F\cap E_x,x) \to 0
\]
where \((F\cap E_x,x)\) is the Galois group of the normal extension \(p'\). As we clearly have an exact sequence of pointed sets 
\[
0\to (F\cap E_x,x) \to (F,x) \to \pi_0(E,x) \to 0
\]
and because \((F,x)=\pi_0(F,x)\) since \(F\) is a discrete space, we can paste the two sequences together to obtain an exact sequence 
\[
0\to \pi_1(E,x)\to\pi_1(B,y)\to \pi_0(F,x) \to \pi_0(E,x) \to 0
\]
and this is the low-dimensional part of the usual exact homotopy sequence induced by the fibration 
\[
{(F,x)\to (E,x)\to (B,y)}.
\]
Notice that \(\pi_0(B,y)=0\) as \(B\) is connected. What we would like to point out here is that the morphism \(\pi_1(B,y)\to \pi_0(F,x)=(F,x)\) is the \(p\)-component \(\delta_p\) of the natural transformation defining the right hand Kan extension pictured above. Hence, we are in a similar situation as with the algebraic case studied in the previous section, where the Kan extension of Corollary \ref{oldKan} expresses a universal property of the connecting morphisms in an exact homology sequence.


\providecommand{\noopsort}[1]{}
\providecommand{\bysame}{\leavevmode\hbox to3em{\hrulefill}\thinspace}
\providecommand{\MR}{\relax\ifhmode\unskip\space\fi MR }
\providecommand{\MRhref}[2]{%
  \href{http://www.ams.org/mathscinet-getitem?mr=#1}{#2}
}
\providecommand{\href}[2]{#2}

\small\noindent Tomas Everaert\\
Vakgroep Wiskunde, Vrije Universiteit Brussel\\
Pleinlaan 2, 1050 Brussel, Belgium\\
\textit{teveraer@vub.ac.be}

\vspace{2.5mm}

\noindent Julia Goedecke\\
Department of Pure Mathematics and Mathematical Statistics\\
University of Cambridge\\
Cambridge CB3 0WB, United Kingdom\\
\textit{julia.goedecke@cantab.net}

\vspace{2.5mm}

\noindent Tim Van~der Linden\\
Institut de Recherche en Math\'ematique et Physique\\
Universit\'e catholique de Louvain\\
chemin du cyclotron~2 bte L7.01.02, 1348 Louvain-la-Neuve, Belgium\\
\textit{tim.vanderlinden@uclouvain.be}

\end{document}